\theoremstyle{plain}
\newtheorem{theorem}{Theorem}[section]            
\newtheorem{proposition}[theorem]{Proposition}  
\newtheorem{corollary}[theorem]{Corollary}	      %
\theoremstyle{definition}
\newtheorem{remark}[theorem]{Remark}	
\newtheorem{example}{Example}[section]
\numberwithin{theorem}{section}
\numberwithin{equation}{section}
\numberwithin{figure}{section}
\newcommand{\gaction}[2]{\genfrac{}{}{0.5pt}{}{#1}{#2}%
                        \!\lower2pt\hbox{\rotatebox[origin=c]{-90}{{$\looparrowright$}}}}
\newcommand{\dotfraction}[2]{\genfrac{}{}{0.5pt}{}{#1}{#2}%
                        \!\lower.5pt\hbox{{$\circ$}}}
\titleformat*{\section}{\fontsize{14pt}{14pt} \bf}        
\begin{document}

\title{On a Diophantine equation that generates all integral Apollonian Gaskets}

\author{Jerzy Kocik\\
{\small Department of Mathematics, Southern Illinois University, Carbondale, IL 62901}\\
{\small jkocik@siu.edu}} 

\date{}

\sloppy

\maketitle



\begin{abstract} 
A remarkably simple Diophantine quadratic equation is known to generate all Apollonian integral gaskets (disk packings).  A new derivation of this formula is presented here based on inversive geometry.  Also occurrences of Pythagorean triples in such gaskets is discussed.
\\
\\
{\bf Keywords:} Integral Apollonian disk packings, inversive geometry, Pythagorean triples. 
\end{abstract}

\section{Introduction}  
\label{s:1}

Apollonian disk packing (or Apollonian gasket) is a pattern obtained by starting with three mutually tangent circles of which one contains the other two, then recursively inscribing new circles (disks) in the curvilinear triangular regions (called ``ideal triangles'') formed between the circles.  Figure 1 shows a few examples, including (a) a special case of the noncompact ``Apollonian Strip'', (b) the Apollonian Window which is the only case that has symmetry $D_2$, (c) the regular threefold gasket, which has symmetry $D_3$, and (d) a general gasket that may have no mirror symmetry.

\begin{figure}[htbp] 
  \centering
  \includegraphics[width=5.67in]{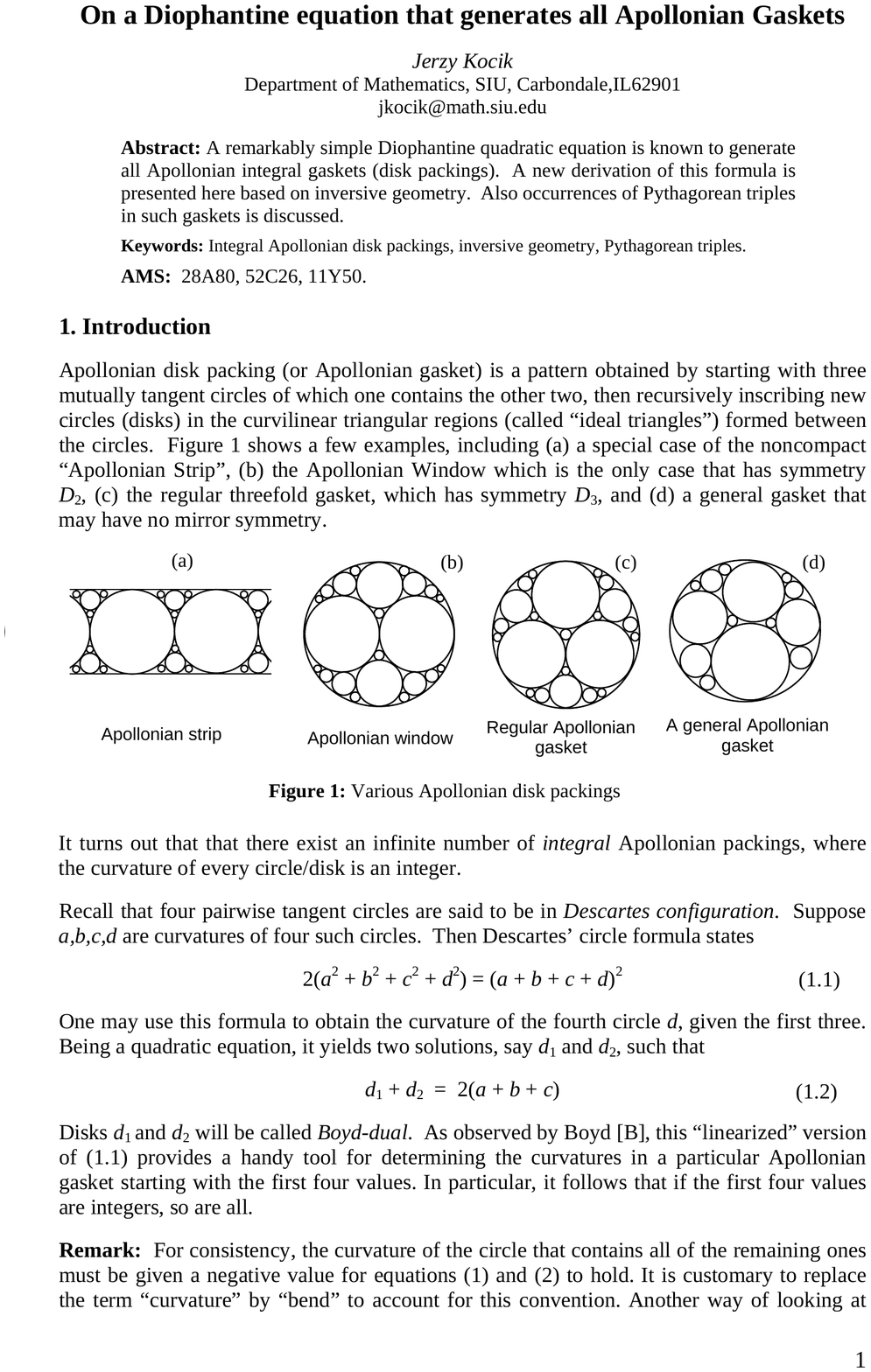}
  \caption{Various Apollonian disk packings}
  \label{fig:fig-1}
\end{figure}

It turns out that there exist an infinite number of \textit{integral} Apollonian packings, where the curvature of every circle/disk is an integer. 

Recall that four pairwise tangent circles are said to be in \textit{Descartes configuration}.  Suppose $a,b,c,d$ are curvatures of four such circles.  Then Descartes' circle formula states 
\begin{equation}    \label{eq:1.1}
  2(a^2+b^2+c^2+d^2) = (a+b+c+d)^2.
\end{equation}
One may use this formula to obtain the curvature of the fourth circle $d$, given the first three.  Being a quadratic equation, it yields two solutions, say $d_1$ and $d_2$, such that 
\begin{equation}    \label{eq:1.2}
  d_1+d_2 = 2(a+b+c).
\end{equation}
Disks $d_1$ and $d_2$ will be called \textit{Boyd-dual}.  As observed by Boyd \cite{B}, this ``linearized'' version of \eqref{eq:1.1} provides a handy tool for determining the curvatures in a particular Apollonian gasket starting with the first four values. In particular, it follows that if the first four values are integers, so are all. 

\begin{remark}
For consistency, the curvature of the circle that contains all of the remaining ones must be given a negative value for equations \eqref{eq:1.1} and \eqref{eq:1.2} to hold. It is customary to replace the term ``curvature'' by ``bend'' to account for this convention. Another way of looking at this is to think of disks rather than circles, where the greatest circle is the boundary of an exterior, unbounded, region.  This way no two disks in an Apollonian gasket overlap.
\end{remark}

In this note, Apollonian gaskets will be labeled by the bends of the five greatest circles, i.e., by the five least bends.  Why five will become clear when we consider symmetries, Appendix A. 

Examples of integral Apollonian gaskets include these:
\[
\begin{alignedat}{2}
  &(0,\ 0,\ 1,\ 1,\ 1) 
	&\quad &- \hbox{ Apollonian belt (Figure \ref{fig:fig-1}a)} \\
  &(-1,\ 2,\ 2,\ 3,\ 3) 
	&\quad &- \hbox{ Apollonian window (Figure \ref{fig:fig-1}c)} \\
  &(-2,\ 3,\ 6,\ 7,\ 7) 
	&\quad &- \hbox{ less regular gasket, but with $D_1$ symmetry} \\
  &(-6,\ 11,\ 14,\ 15,\ 23) 
	&\quad &- \hbox{ quite irregular case}
\end{alignedat}
\]
Note that the regular gasket (see Figure \ref{fig:fig-1}) cannot have integer bends, as its quintet of curvatures is, up to scale, $(1-\surd 3,  \ 2,\  2,\ 2,\ 10+2\surd 3)$, hence its curvatures are populated by elements of $\mathbb{Z}[\surd 3]$.  Due to \eqref{eq:1.2}, the integrality of the first four circles determines integrality of all disks in the packing.  An integral Apollonian packing is \textbf{irreducible} if the bends have no common factor except 1.  

The problem is to classify and determine all irreducible integral Apollonian gaskets.

\section{Integral disk packing -- the formula}  
\label{s:2}

All integral Apollonian disk packings may be determined using a simple Diophantine quadratic equation with constraints. The derivation of this formula is a much simpler alternative to that of ``super-Apollonian packing'' \cite{GLMWY1}--\cite{GLMWY3} and is based on inversive geometry.
\newpage
\begin{theorem}  \label{th:2.1}  
There is a one to one correspondence between the irreducible integral Apollonian gaskets and the irreducible quadruples of non-negative integers $B,k,n,\mu \in \mathbb{N}$ that are solutions to quadratic equation
\begin{equation}    \label{eq:2.1}
  B^2+\mu^2 = kn
\end{equation}
with constraints   
\[
\begin{alignedat}{2}
  \hbox{(i)} &&\quad &0 \le \mu \le B/\surd 3, \\
  \hbox{(ii)} &&\quad &2\mu \le k \le n.
\end{alignedat}
\]
Every solution to \eqref{eq:2.1} corresponds to an Apollonian gasket with the following quintet of the major bends (curvatures):
\[
  (B_0,\,B_1\,,B_2,\,B_3,\,B_4)
	= (-B,\, B+k,\, B+n,\, B+k+n-2\mu,\, B+k+n+2\mu)
\]

\begin{figure}[tbp] 
  \centering
  \includegraphics[width=3in,keepaspectratio]{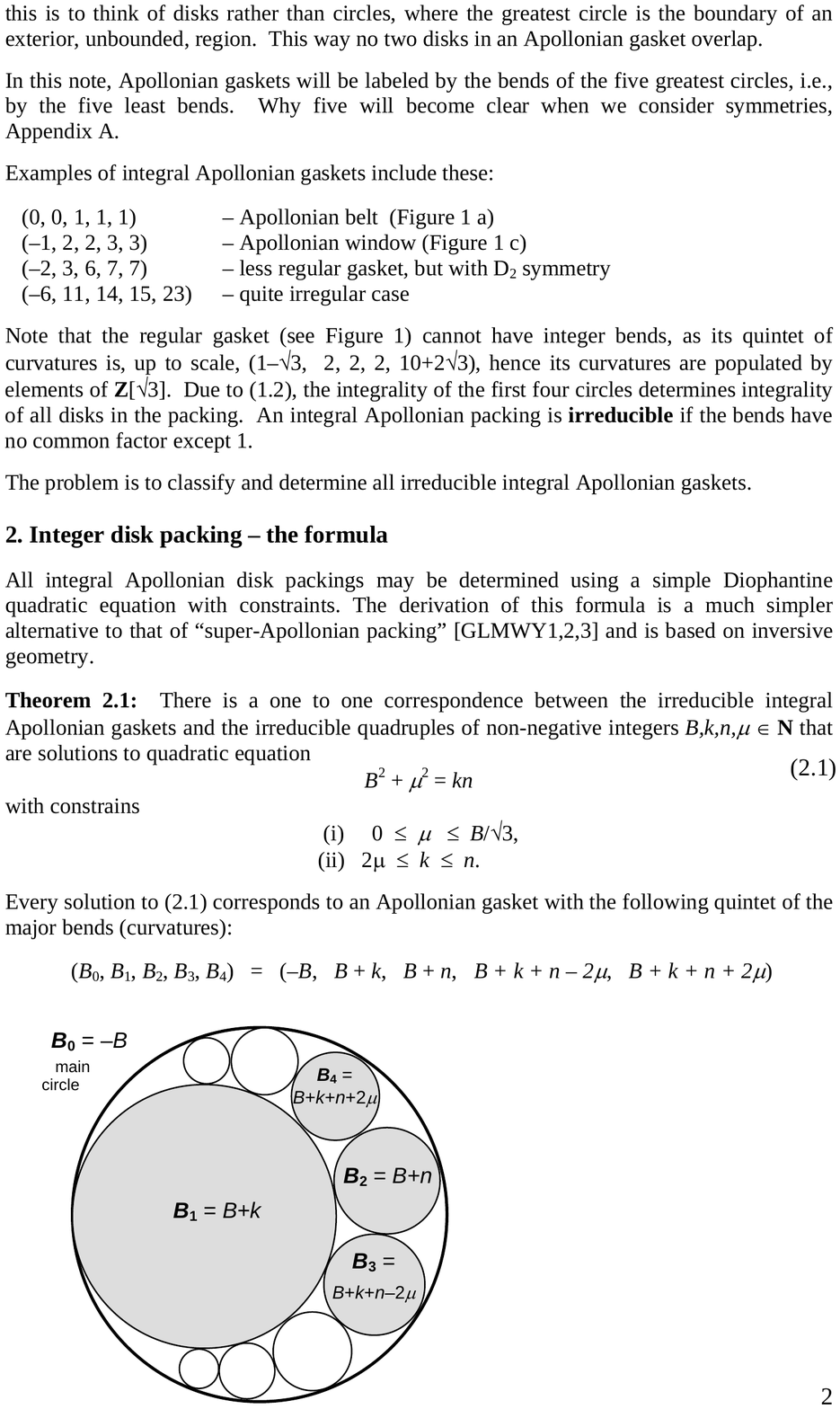}
  \caption{An Apollonian gasket and its four greatest circles (smallest curvatures)}
  \label{fig:fig-2}
\end{figure}

\end{theorem}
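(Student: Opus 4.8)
The plan is to split Theorem~\ref{th:2.1} into an algebraic identity, a geometric input (the converse of the Descartes circle theorem) together with a normalization lemma proved by inversive geometry, and a matching of integrality and irreducibility. Write the proposed quintet as $B_0=-B$, $B_1=B+k$, $B_2=B+n$, $B_3=B+k+n-2\mu$, $B_4=B+k+n+2\mu$, so that $B_3+B_4=2(B_0+B_1+B_2)$ holds identically (the disks $C_3,C_4$ are Boyd-dual), with inverse substitution $B=-B_0$, $k=B_0+B_1$, $n=B_0+B_2$, $\mu=\tfrac12(B_0+B_1+B_2-B_3)=\tfrac12(B_4-B_0-B_1-B_2)$. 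The algebraic core is this: a direct expansion collapses the Descartes relation \eqref{eq:1.1} for $(B_0,B_1,B_2,B_3)$ to $4(B^2+\mu^2-kn)=0$, and since $B_3+B_4=2(B_0+B_1+B_2)$ the quadruple $(B_0,B_1,B_2,B_4)$ satisfies the same relation; thus \eqref{eq:2.1} is equivalent to saying that $\{C_0,C_1,C_2,C_3\}$ and $\{C_0,C_1,C_2,C_4\}$ are Descartes quadruples sharing the triple $\{C_0,C_1,C_2\}$. Two remarks make the substitution integral and simplify the constraints: applying \eqref{eq:1.1} to $(B_0,B_1,B_2,B_4)$ shows $(B_0+B_1+B_2+B_4)^2$ is even, hence $\mu\in\mathbb Z$; and (i) is automatic given (ii) and \eqref{eq:2.1}, since $2\mu\le k\le n$ forces $(2\mu)^2\le kn=B^2+\mu^2$, i.e.\ $3\mu^2\le B^2$.

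From a solution to a gasket: given $(B,k,n,\mu)$ satisfying \eqref{eq:2.1} and (ii), the constraints force $B_0\le B_1\le B_2\le B_3\le B_4$ with $B_0\le 0$. By the converse of the Descartes circle theorem there is an actual configuration of oriented circles (a line wherever a bend vanishes) with bends $B_0,B_1,B_2,B_3$, and since $B_0\le 0$ while $B_0+B_1+B_2+B_3>0$, this is of the enclosing type (two parallel lines bounding a strip in the limiting case $B=0$, the only case \eqref{eq:2.1} with (i) then allows), so the Apollonian construction applies. It yields a packing all of whose bends are integers by \eqref{eq:1.2}, in which $C_4$ is the disk inscribed in the second of the two ideal triangles cut from $C_0$ by $C_1,C_2$. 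A descent on the curvilinear triangles (or, again, the inversion to the strip below) shows no disk outside $C_0,\dots,C_4$ has bend below $B_4$, so $B_0\le\cdots\le B_4$ is the multiset of five least bends -- the gasket's quintet is exactly the one in the statement. Irreducibility transfers: a common divisor of $B_0,\dots,B_4$ divides $B,k,n,2\mu$, and \eqref{eq:2.1}, via $2$-adic valuations in $\mu^2=kn-B^2$, forbids it from dividing $2\mu$ but not $\mu$.

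From a gasket to a solution, where the inversive geometry does its work: let $\mathcal P$ be an irreducible integral gasket with bends $B_0\le B_1\le\cdots$. The crux is the normalization lemma, that the five largest disks of $\mathcal P$ are the outer disk $C_0$, two disks $C_1,C_2$ that are mutually tangent and both tangent to $C_0$, and the Boyd-dual pair $C_3,C_4$ inscribed in the two ideal triangles of $\langle C_0C_1C_2\rangle$. To see this, invert in a point of tangency of $C_1$ and $C_2$: this carries $C_1,C_2$ to parallel lines and $\mathcal P$ to a Euclidean copy of the Apollonian strip, in which the disks are ordered by size in an evident way; transporting the conclusion back identifies the five largest disks and their tangencies, and in particular shows $C_1$ fits inside $C_0$, so $B_1\ge -B_0$ and $k\ge 0$, and that $B_2\le B_3$. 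Defining $B,k,n,\mu$ by the inverse substitution then produces non-negative integers satisfying \eqref{eq:2.1} by the algebraic core and (ii) by the ordering, hence also (i). The two assignments invert each other by construction, so the correspondence is a bijection.

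The step I expect to be the real obstacle is the normalization lemma -- establishing uniformly, and especially through the degenerate cases (lines of bend $0$, the coincidence $B_3=B_4$ in the window, $B_1=B_2$ in the strip), that the five least bends of an arbitrary Apollonian gasket form a Descartes triple together with a Boyd-dual pair, so that the quintet notation is well defined and the descent in the previous paragraph is tight. The inversion to the strip is precisely the device that makes the size order of the disks legible; what remains is the bookkeeping of how bends rescale under that inversion and the hand-checking of the boundary cases.
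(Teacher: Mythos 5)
Your algebraic reductions are correct, and in places cleaner than the paper's: expanding \eqref{eq:1.1} on $(-B,\,B+k,\,B+n,\,B+k+n-2\mu)$ does collapse to $4(B^2+\mu^2-kn)=0$, the parity argument gives $\mu\in\mathbb{Z}$, constraint (i) indeed follows from (ii) together with \eqref{eq:2.1} (this is how the paper gets it too), and the gcd transfer works. But note what the paper's proof actually consists of: it never checks Descartes at all. Its entire content is the quantitative inversion computation --- invert the gasket in a circle of radius $2/B$ centered at the tangency point of $C_0$ and $C_1$, obtaining a strip of half-width $\rho$ in which the image of $C_2$ has center at height $h$; then \eqref{eq:2.3} gives $B_1=B+B^2\rho$, $B_2=B+(4+h^2B^2)/(4\rho)$, $B_3=B+k+n-hB^2$, integrality of the bends forces $k=B^2\rho$, $n=(4+h^2B^2)/(4\rho)$, $m=hB^2$ to be integers, eliminating $\rho,h$ yields $4kn=4B^2+m^2$, hence $m=2\mu$ and \eqref{eq:2.1}, and the ordering $B_1\le B_2\le B_3$ delivers the constraints, with $(\rho,h)\leftrightarrow(k,\mu)$ supplying the parametrization of gaskets that underlies the bijection.

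The genuine gap is that your proposal defers exactly this content. Your ``normalization lemma'' --- that the five least bends of an arbitrary gasket are the outer disk, two mutually tangent disks tangent to it, and the Boyd-dual pair inscribed in the two ideal triangles, and, in the converse direction, that in the constructed packing no disk beyond $C_0,\dots,C_4$ has bend below $B_4$ --- is what makes the quintet assignment well defined in both directions, and you only assert it. The sketch you give is moreover circular as stated: inverting at ``a point of tangency of $C_1$ and $C_2$'' presupposes that the two largest inner disks are tangent, which is part of what the lemma claims; and since inversion does not preserve size order, ``ordered by size in an evident way'' is precisely the bend bookkeeping you admit remains to be done --- it is the computation displayed above, i.e.\ the paper's proof. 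Likewise the ``descent'' showing the sixth-smallest bend exceeds $B_4$ is named but not carried out. Until this lemma (or the equivalent explicit $(\rho,h)$-parametrization) is established, neither surjectivity (every irreducible integral gasket has the stated quintet as its five least bends, for some admissible $(B,k,n,\mu)$) nor injectivity (the two assignments invert each other) is proved, so the claimed one-to-one correspondence is not yet obtained.
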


\begin{corollary}
The Apollonian gasket is integral iff gcd$(B,k,n) = 1$.
\end{corollary}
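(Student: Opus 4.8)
The plan is to read everything off the explicit quintet
$(B_0,\dots,B_4)=(-B,\,B+k,\,B+n,\,B+k+n-2\mu,\,B+k+n+2\mu)$, the defining relation $B^2+\mu^2=kn$, and Boyd's linear rule \eqref{eq:1.2}. Note first that any integer quadruple solving \eqref{eq:2.1} already produces integer bends, so the real content is that the bends of the resulting packing share no common factor exactly when $\gcd(B,k,n)=1$; this is the natural companion to Theorem~\ref{th:2.1}, whose correspondence was stated for \emph{irreducible} quadruples.

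First, suppose $\gcd(B,k,n)=1$ and let $d$ be a common divisor of the five major bends. From $B_0=-B$ we get $d\mid B$; then $B_1=B+k$ gives $d\mid k$ and $B_2=B+n$ gives $d\mid n$, so $d\mid\gcd(B,k,n)=1$. Thus the five least bends are coprime. To promote this to the whole gasket, observe that $(B_0,B_1,B_2,B_3)$ is a Descartes quadruple sitting among the listed bends, and by \eqref{eq:1.2} every other bend of the packing is produced from such a quadruple by successively replacing a bend $d_1$ with $d_2=2(a+b+c)-d_1$; a prime dividing $a,b,c,d_1$ would then divide $d_2$ as well, so a prime dividing every bend of the packing would already divide the generating quadruple — impossible. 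Hence the packing is irreducible.

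Conversely, let $p$ be a prime with $p\mid\gcd(B,k,n)$. Then $p\mid kn-B^2=\mu^2$, and since $p$ is prime, $p\mid\mu$. Consequently $p$ divides each of $-B$, $B+k$, $B+n$, $B+k+n-2\mu$, $B+k+n+2\mu$, i.e.\ all five major bends; running \eqref{eq:1.2} forward, $p$ divides every bend of the packing, so it is reducible. (Equivalently, $(B/p,\,k/p,\,n/p,\,\mu/p)$ is again a solution of \eqref{eq:2.1} satisfying (i)--(ii), so the given quadruple is not the primitive representative of its gasket.) This establishes both implications.

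The number theory here is trivial — the single non-formal step is $p\mid\mu^2\Rightarrow p\mid\mu$, which is just Euclid's lemma. The one point that genuinely needs attention is the passage from ``the five smallest bends have gcd $1$'' to ``the whole Apollonian packing is irreducible'': this is exactly where \eqref{eq:1.2} is used, and one must be sure that a generating Descartes configuration really does lie among the five quoted bends before propagating the coprimality (or the common factor) through the recursion. I expect that bit of bookkeeping, rather than the arithmetic, to be the only thing one can get wrong.
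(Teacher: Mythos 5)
Your argument is correct and matches the reasoning the paper leaves implicit: the corollary is read off from the explicit quintet $(-B,\,B+k,\,B+n,\,B+k+n\pm 2\mu)$, with the master equation giving $p\mid\gcd(B,k,n)\Rightarrow p\mid\mu$ and Boyd's rule \eqref{eq:1.2} propagating a common factor (or its absence) from the initial Descartes quadruple to the whole packing. No gaps; the parenthetical check that $(B/p,k/p,n/p,\mu/p)$ still satisfies \eqref{eq:2.1} and the constraints is the same homogeneity observation underlying the paper's bijection for irreducible quadruples.
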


Figure \ref{fig:fig-2} locates the curvatures of the theorem in the Apollonian gasket.  Note that the triple of integers $(B$, $\mu$, $k)$ is also a good candidate for a label that uniquely identifies an Apollonian gasket (since $n$ is determined by: $n = (B^2 + \mu^2)/k)$.

Equation \eqref{eq:2.1} leads to an algorithm producing all integral Apollonian gaskets, ordered by the curvatures, presented in Figure \ref{fig:fig-3a}:

\begin{figure}[h!] 
  \centering
  \includegraphics[width=4.5in,keepaspectratio]{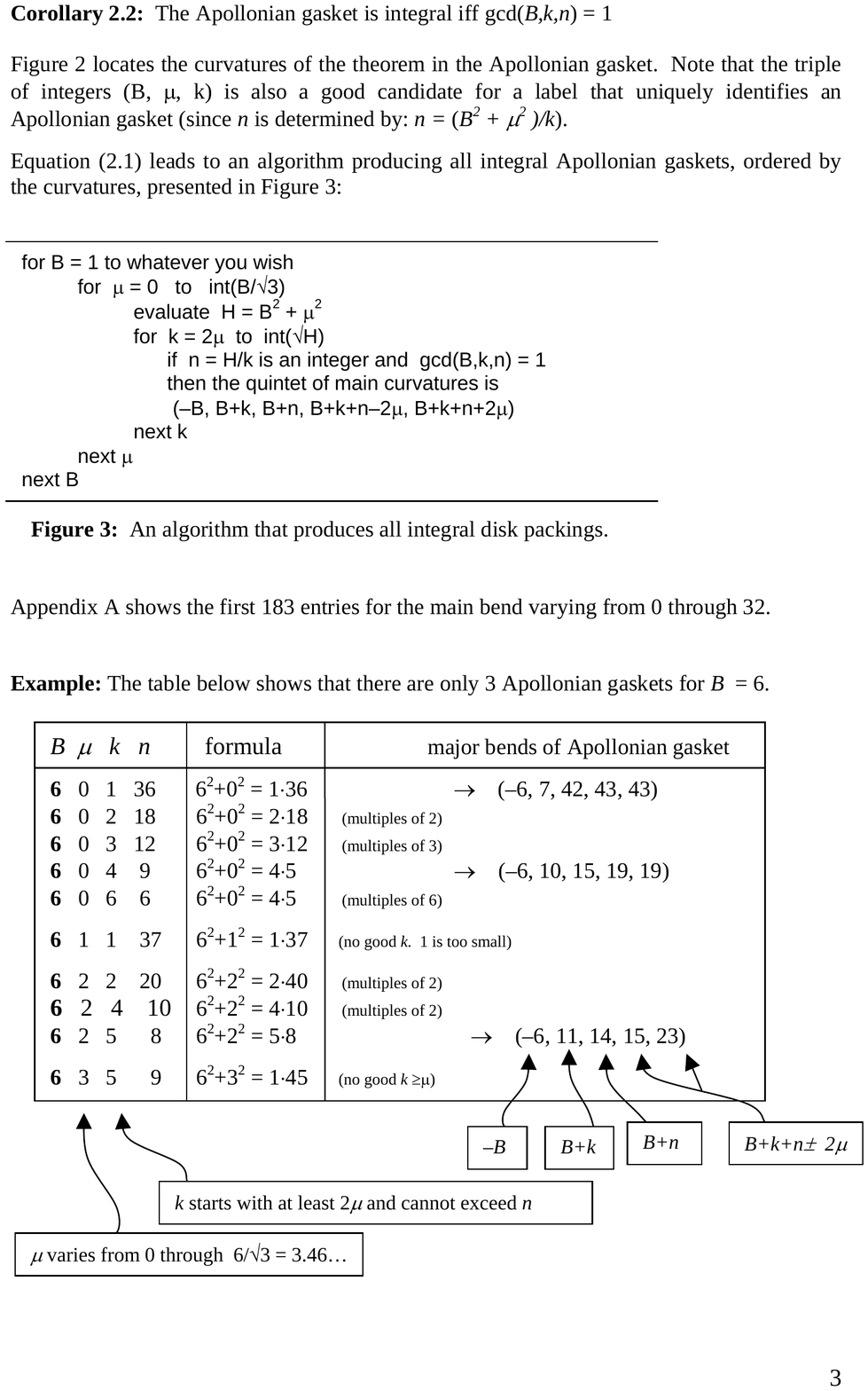}
  \caption{An algorithm that produces all integral disk packings.}
  \label{fig:fig-3a}
\end{figure}

Appendix A shows the first 183 entries for the main bend varying from 0 through 32.

\begin{example} 
The table below shows that there are only 3 Apollonian gaskets for $B = 6$.
\end{example}

\begin{figure}[H]
  \centering
  \includegraphics[width=5.67in,height=3.78in,keepaspectratio]{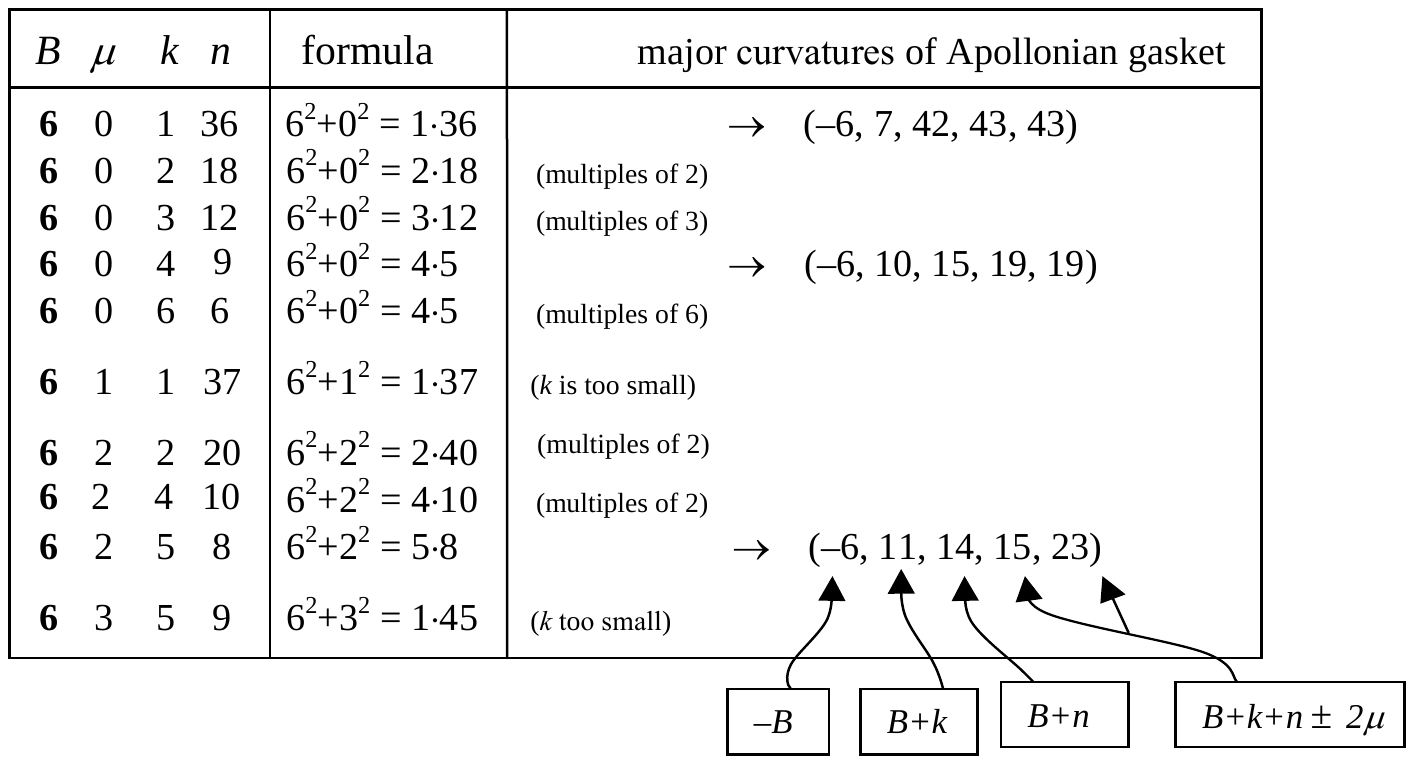}
  \label{fig:fig-3b}
\end{figure}

\begin{proof}[Proof Theorem 2.1] 
(We denote a circle and its curvature by the same symbol). Consider an Apollonian gasket of disks inscribed inside a circle of curvature $B$ (bend equal to $-B$). Draw an axis through the center of this circle and the center of the next largest circle $B_1$ (the horizontal axis $A$ in Figure \ref{fig:fig-4}).  Inverting the gasket through a circle $K$ of radius $2/B$ will produce an Apollonian belt, shown in the figure on the right side of the gasket. Denote its width by $2\rho$. Lines $L_0$ and $L_1$ are the images of $B$ and $B_1$, respectively.

\begin{figure}[htbp] 
  \centering
  \includegraphics[width=3.5in,keepaspectratio]{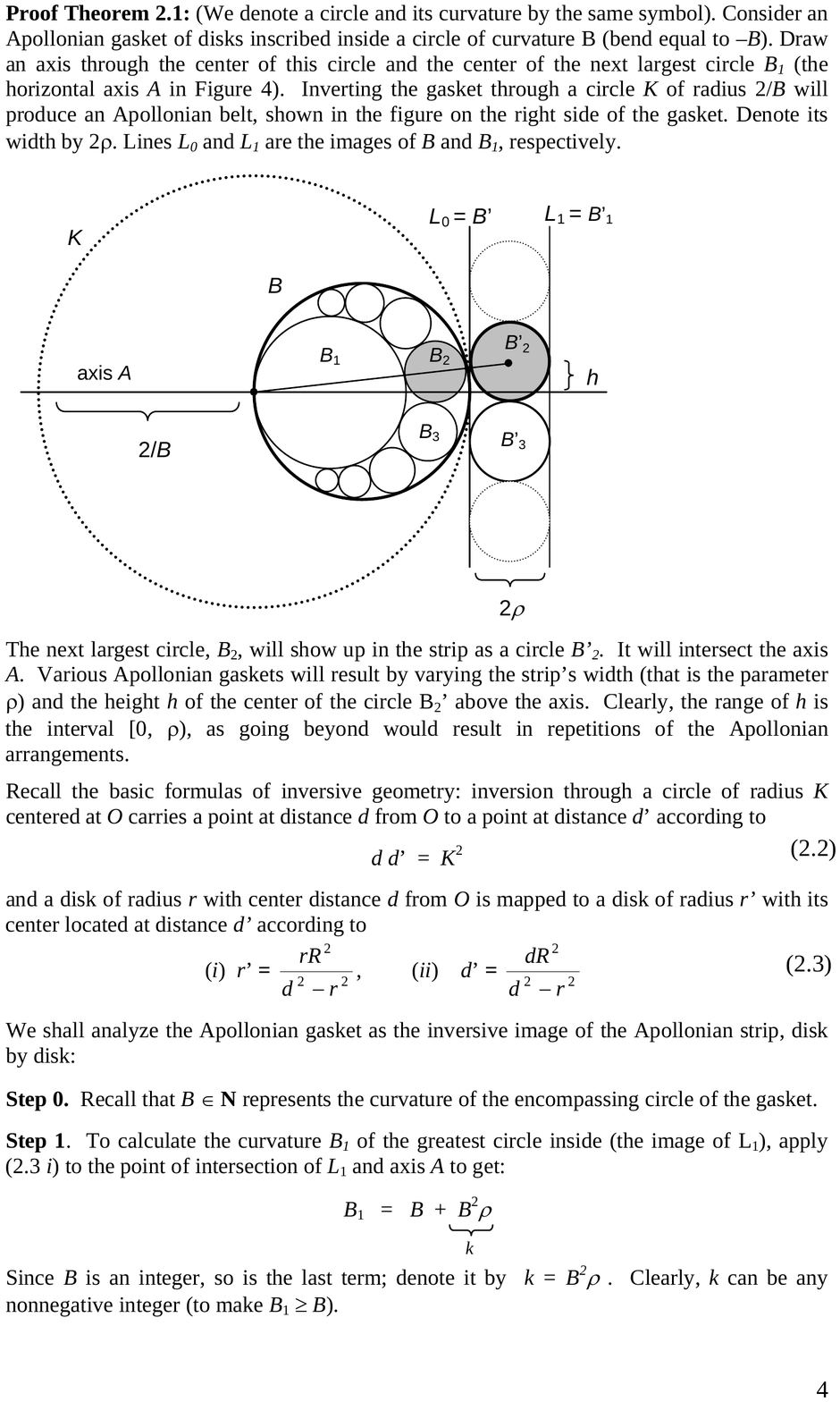}
  \caption{The method of inversion.}
  \label{fig:fig-4}
\end{figure}

The next largest circle, $B_2$, will show up in the strip as a circle $B'_2$\,.  It will intersect the axis $A$.  Various Apollonian gaskets will result by varying the strip's width (that is the parameter $\rho$) and the height $h$ of the center of the circle $B_2'$ above the axis.  Clearly, the range of $h$ is the interval $[0, \rho)$, as going beyond would result in repetitions of the Apollonian arrangements. 

Recall the basic formulas of inversive geometry: inversion through a circle of radius $K$ centered at $O$ carries a point at distance $d$ from $O$ to a point at distance $d'$ according to
\begin{equation}    \label{eq:2.2}
  dd' = K^2
\end{equation}
and a disk of radius $r$ with center distance $d$ from $O$ is mapped to a disk of radius $r'$ with its center located at distance $d'$ according to
\begin{equation}    \label{eq:2.3}
  \hbox{({\it i})} \ \  r' = \dfrac{rR^2}{d^2-r^2}\,, \qquad
  \hbox{({\it ii})} \ \  d' = \dfrac{dR^2}{d^2-r^2}\,.
\end{equation}
We shall analyze the Apollonian gasket as the inversive image of the Apollonian strip, disk by disk: 
\vskip7pt
\noindent
\textbf{Step 0.}  Recall that $B\in N$ represents the curvature of the encompassing circle of the gasket.
\vskip7pt
\noindent
\textbf{Step 1}.  To calculate the curvature $B_1$ of the greatest circle inside (the image of $L_1$), apply \eqref{eq:2.3}i) to the point of intersection of $L_1$ and axis $A$ to get:
\[
  B_1 = B + \underbrace{B^2 \rho}_{k} \,.
\]
Since $B$ is an integer, so is the last term; denote it by  $k = B^2 \rho$.  Clearly, $k$ can be any nonnegative integer (to make $B_1 \geq B$).
\vskip7pt
\noindent
\textbf{Step 2.}   For $B_2$, the image of $B'_2$, use \eqref{eq:2.3}ii to get 
\[
\begin{alignedat}{2}
  B_2 &= \dfrac{1}{r'} 
	= \dfrac{1}{\rho} \cdot \dfrac{d^2-\rho^2}{(2/B)^2} 
	= \dfrac{B^2}{4\rho} \cdot (d^2-\rho^2)
	&\qquad &\hbox{(simplification)} \\
   &= \dfrac{B^2}{4\rho} \cdot \big( (2/B+\rho)^2
	+ h^2 - \rho^2 \big)
	&\qquad &\hbox{(Pythagorean thm)} \\
   &= B + \underbrace{\dfrac{4+h^2B^2}{4\rho}}_{n}
\end{alignedat}
\]
As before, we conclude that the last term must be an integer; denote it by $n$. Clearly,  $n \geq k$ (to make $B_2 \geq B_1$).

\vskip7pt

\noindent
\textbf{Step 3.}  Similarly, we get a formula for the next largest circle $B_3$ located below $B_2$, the image of $B'_3$.  Simply use the above formula with $h' = h - 2\rho$ instead of $h$ to get 
\[
\begin{aligned}
  B_3 &= \dfrac{1}{\rho} \cdot \dfrac{d^2-\rho^2}{(2/B)^2} 
	= \dfrac{B^2}{4\rho} \cdot \big( (2/B+\rho)^2
	+ (h^2 - \rho^2)^2 - \rho^2 \big) \\
   &= B \ + \  \underbrace{\rho B^2\vphantom{\dfrac{h^2}{4\rho}}}_{k} 
	 \ +\  \underbrace{\dfrac{4+h^2B}{4\rho}}_{n}
	 \ - \  \underbrace{hB^2\vphantom{\dfrac{h^2}{4\rho}}}_{m}
\end{aligned}
\]
Quite pleasantly, the first three terms coincide with terms from previous steps.  Since we have already established that they must be integers, so is the last one; denote it by  $m$. Thus we have three integers defined by the geometry of the construction:
\begin{equation}    \label{eq:2.4}
  n = \dfrac{4+h^2B^2}{4\rho}, \quad k = \rho B^2,
	\quad m = hB^2.
\end{equation}
Integers $k, n$ and $m$ are not independent;  take the definition for $n$ and eliminate $h$ and $r$ from it to get
\[
  4nk = 4B^2 + m^2,
\]
from which it follows immediately that $m$ must be even, say $m = 2\mu$.  Reduce the common factor of 4 to get the ``master equation'' \eqref{eq:2.1}.

As to the \textbf{constraints}, the order of the curvatures gives three inequalities:
\begin{equation}    \label{eq:2.5}
  B_1 \ge B \Rightarrow k \ge 0, \quad
	B_2 \ge B_1 \Rightarrow n \ge 0, \quad
	B_3 \ge B_2 \Rightarrow k \ge 2\mu.
\end{equation}
The additional upper bound for $\mu$ comes from the fact that $k$ takes its greatest value at $k = \sqrt{B^2+\mu^2}$
\,.  Thus the last inequality of \eqref{eq:2.5},  $k \geq  2\mu$,  implies:
\[
  \sqrt {B^2+\mu^2} \ge 2\mu
\]
and therefore (after squaring)
\[
  B^2 > 3\mu^2.
\]
This ends the proof.  
\end{proof}

\section{Symbols of the circles in an Apollonian gasket}
\label{s:3}

The \textbf{symbol of a circle} \cite{JK1,JK2} is a formal fraction  
\[
  \dfrac{\dot x, \dot y}{b}
\]
where $b = 1/r$ denotes the bend (signed curvature) of the circle and the position of the center is 
\[
  (x,y) = \left( \dfrac{\dot x}{b} ,\, 
	\dfrac{\dot y}{b} \right).
\]
By \textbf{reduced coordinates} we mean the pair $(\dot x, \dot y)$.  In the case of the Apollonian Window (packing with the major curvatures $(-1,\ 2,\ 2,\ 3,\ 3))$, the reduced coordinates and the bend of each circle are integers, see Figure \ref{fig:fig-5}.

\begin{figure}[htbp] 
  \centering
  \includegraphics[width=5.67in,keepaspectratio]{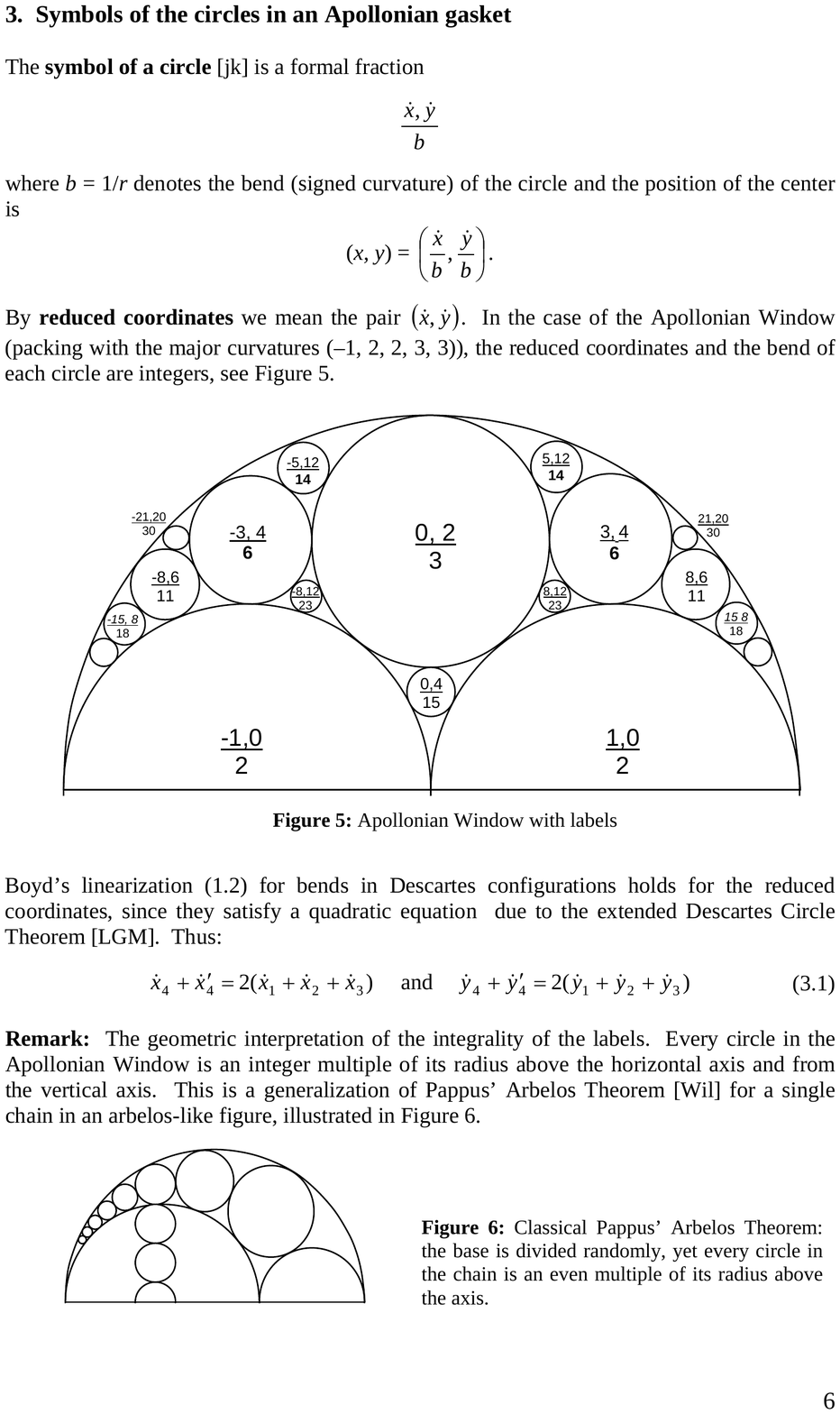}
  \caption{Apollonian Window with labels.}
  \label{fig:fig-5}
\end{figure}

Boyd's linearization \eqref{eq:1.2} for bends in Descartes configurations holds for the reduced coordinates, since they satisfy a quadratic equation  due to the extended Descartes Circle Theorem \cite{GLMWY1, GLMWY2, GLMWY3}.  Thus:
\begin{equation}    \label{eq:3.1}
  \dot x_4 + \dot x'_4 = 2(\dot x_1 + \dot x_2
	+ \dot x_3) \hbox{ \ and \ }
  \dot y_4 + \dot y'_4 = 2(\dot y_1 + \dot y_2
	+ \dot y_3).
\end{equation}

\begin{remark}
The geometric interpretation of the integrality of the labels.  Every circle in the Apollonian Window is an integer multiple of its radius above the horizontal axis and from the vertical axis.  This is a generalization of Pappus' Arbelos Theorem \cite{Wil} for a single chain in an arbelos-like figure, illustrated in Figure \ref{fig:fig-6}.
\end{remark}

\begin{figure}[htbp] 
  \centering
  \includegraphics[width=2.3in,keepaspectratio]{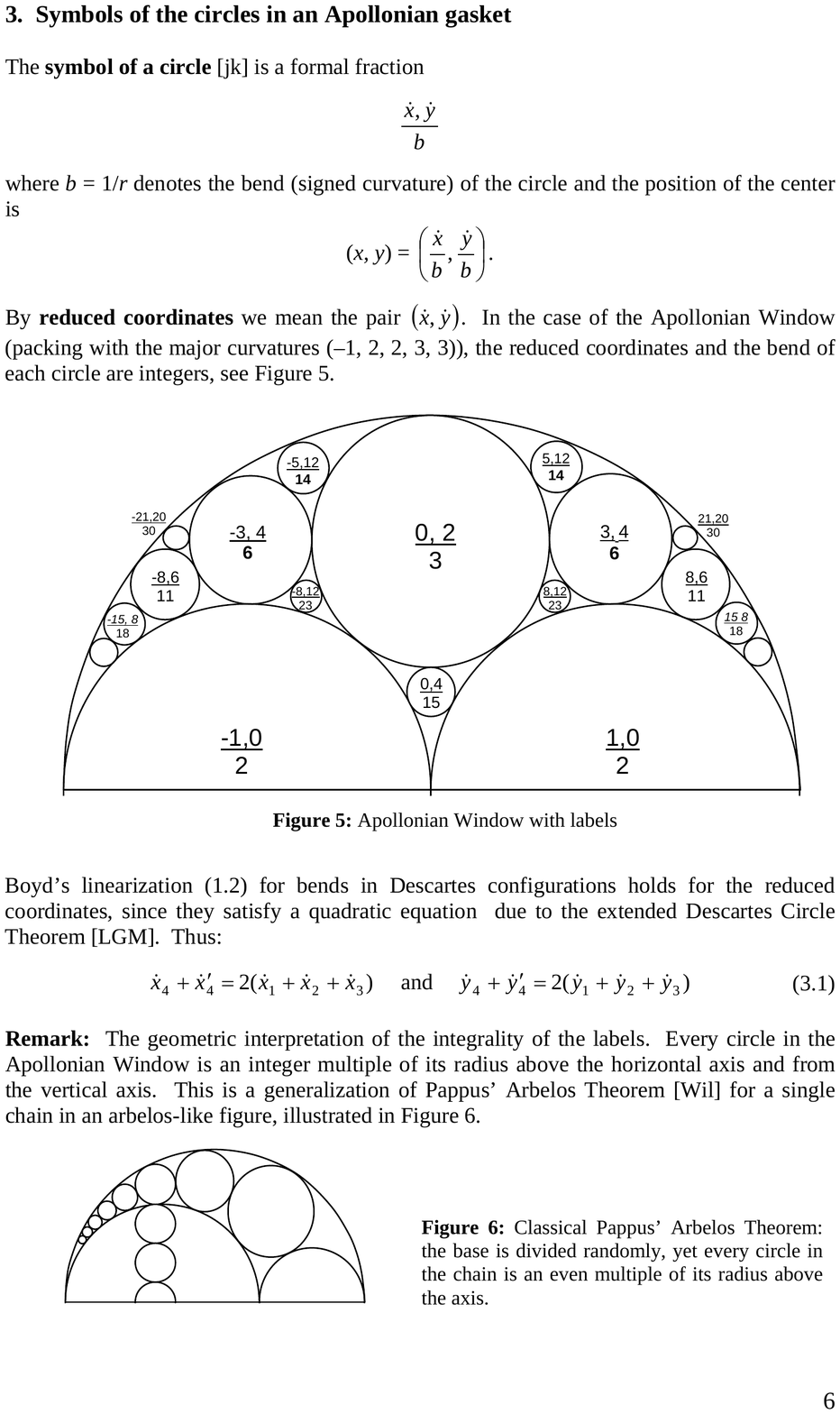}
  \caption{Classical Pappus' Arbelos Theorem: the base is divided randomly, yet every circle in the chain is an even multiple of its radius above the axis.}
  \label{fig:fig-6}
\end{figure}

The question is whether the same may be expected for other integer Apollonian disk packings, that is: Are the three numbers in the label all integers?
\\

\newpage

\begin{proposition}   \label{prp:3.1}  
In the case of the coordinate system with the center located at the center of circle $B$ (see Figure \ref{fig:fig-4}), the labels for the Apollonian gasket generated from $(B, n, k, \mu)$ are as presented in Figure \ref{fig:fig-7}.
\end{proposition}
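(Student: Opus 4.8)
The plan is to transport the symbols of the five major circles through the very inversion already used to prove Theorem~\ref{th:2.1}: in the Apollonian belt all symbols are completely explicit, so everything reduces to understanding how a symbol behaves under that inversion. First I would fix conventions. The inversion circle $K$ of radius $R=2/B$ must be centered at the point $Q$ where $B_0$ and $B_1$ are tangent (this is forced: only an inversion centered at a common point of $B_0$ and $B_1$ sends both to the parallel lines $L_0,L_1$), and $Q$ lies on the axis $A$. Working first in the coordinate frame with origin at $Q$, the belt is explicit: $L_0,L_1$ are vertical lines a distance $2\rho=2k/B^2$ apart, and $B_2',B_3',B_4'$ are three consecutive congruent disks of the chain, each of radius $\rho$, centered on the midline of the belt at heights $h,\ h-2\rho,\ h+2\rho$ with $h=2\mu/B^2$ (all from \eqref{eq:2.4}). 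Their symbols $\dfrac{\dot x,\dot y}{b}$, with $b=1/\rho$ and $(\dot x,\dot y)$ the center divided by $\rho$, are read off at once.

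Next comes the transformation law. The key observation, taken straight from \eqref{eq:2.3}, is that inversion through $K$ multiplies both the distance $d$ of a disk's center from $Q$ and the disk's radius $r$ by the \emph{same} factor $R^2/(d^2-r^2)$, while preserving the ray from $Q$ to the center. Hence the vector from $Q$ to the center, divided by the radius, is unchanged; equivalently, the reduced coordinates of a circle taken with respect to $Q$ are invariant under this inversion. Applying this to $B_2',B_3',B_4'$ immediately gives the reduced coordinates of $B_2,B_3,B_4$ in the $Q$-frame, and their bends are the ones already delivered by Theorem~\ref{th:2.1} (or re-derive them from \eqref{eq:2.3}, using the master equation $B^2+\mu^2=kn$ to collapse $d^2-r^2$).

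The circles $B_0$ and $B_1$ both pass through $Q$, so the invariance above does not apply and they are handled directly — trivially, since $B_0$ is the fixed outer circle and $B_1$ is the unique disk internally tangent to $B_0$ at $Q$ with bend $B+k$, both centered on $A$. Finally I would translate from the $Q$-frame to the frame of Figure~\ref{fig:fig-4}, whose origin is the center of $B_0$, a point of $A$ at distance $1/B$ from $Q$: the translation shifts every $\dot x$ by $b/B$ (for a circle of bend $b$) and leaves $\dot y$ untouched. Collecting the five translated symbols yields exactly the labels of Figure~\ref{fig:fig-7}; as a consistency check they must satisfy Boyd's linearization \eqref{eq:3.1} of the reduced coordinates on the Descartes configuration $(B_0,B_1,B_2;B_3,B_4)$, i.e.\ $\dot x_3+\dot x_4=2(\dot x_0+\dot x_1+\dot x_2)$ and similarly for $\dot y$.

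The algebra is routine; the points that need care are: (i) the observation that the inversion rescales center-distance and radius by the \emph{same} factor — this is the crux, and it is what makes ``reduced coordinates with respect to the center of inversion'' a conserved quantity; (ii) the bookkeeping between the two coordinate frames, including the sign arising from $B_0$ having bend $-B$ and the choice of which side of $Q$ the circle $B_1$ lies on; and (iii) fixing the orientation of the chain so that the disk at height $h+2\rho$ maps to the larger bend $B+k+n+2\mu$ and the one at height $h-2\rho$ to $B+k+n-2\mu$, consistently with the constraint $k\ge 2\mu$ from Theorem~\ref{th:2.1}. Once these are pinned down, $B^2+\mu^2=kn$ performs the simplification and the symbols displayed in Figure~\ref{fig:fig-7} drop out.
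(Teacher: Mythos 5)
Your proposal is correct, and it is essentially the computation the paper leaves implicit: the paper's proof of Proposition \ref{prp:3.1} is just ``direct calculations,'' carried out in the coordinate frame of Figure \ref{fig:fig-4}, i.e.\ exactly the inversion setup you use. Your organizing observation --- that by \eqref{eq:2.3} the inversion rescales the center distance and the radius by the same factor $R^2/(d^2-r^2)$, so the reduced coordinates taken with respect to the tangency point $Q$ of $B_0$ and $B_1$ are invariant --- is sound (the belt disks do not contain or pass through $Q$, so $d>r$ and the formula applies), and together with the translation law $\dot x \mapsto \dot x - b/B$ it reproduces the displayed labels: for instance $B_2'$ has center $(2/B+\rho,\,h)$ and radius $\rho$, giving $Q$-reduced coordinates $(2B/k+1,\,2\mu/k)$, and after translation $\dot x = 2B/k+1-(B+n)/B = (2B^2-nk)/(Bk) = (B^2-\mu^2)/(Bk)$ by the master equation, exactly as in Figure \ref{fig:fig-7}; the $h\pm 2\rho$ disks likewise yield $\bigl(B^2-(k\mp\mu)^2\bigr)/(Bk)$ and $\pm 2(k\pm\mu)/k$ with bends $B+k+n\pm 2\mu$. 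So the only difference from the paper is that you make the bookkeeping explicit and isolate a reusable lemma (invariance of $Q$-based reduced coordinates under the inversion), which is a clean way to carry out the verification.
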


\begin{figure}[tbp] %
   \centering
\begin{picture}(275,175)
\put(0,0){\includegraphics[width=3in,keepaspectratio]{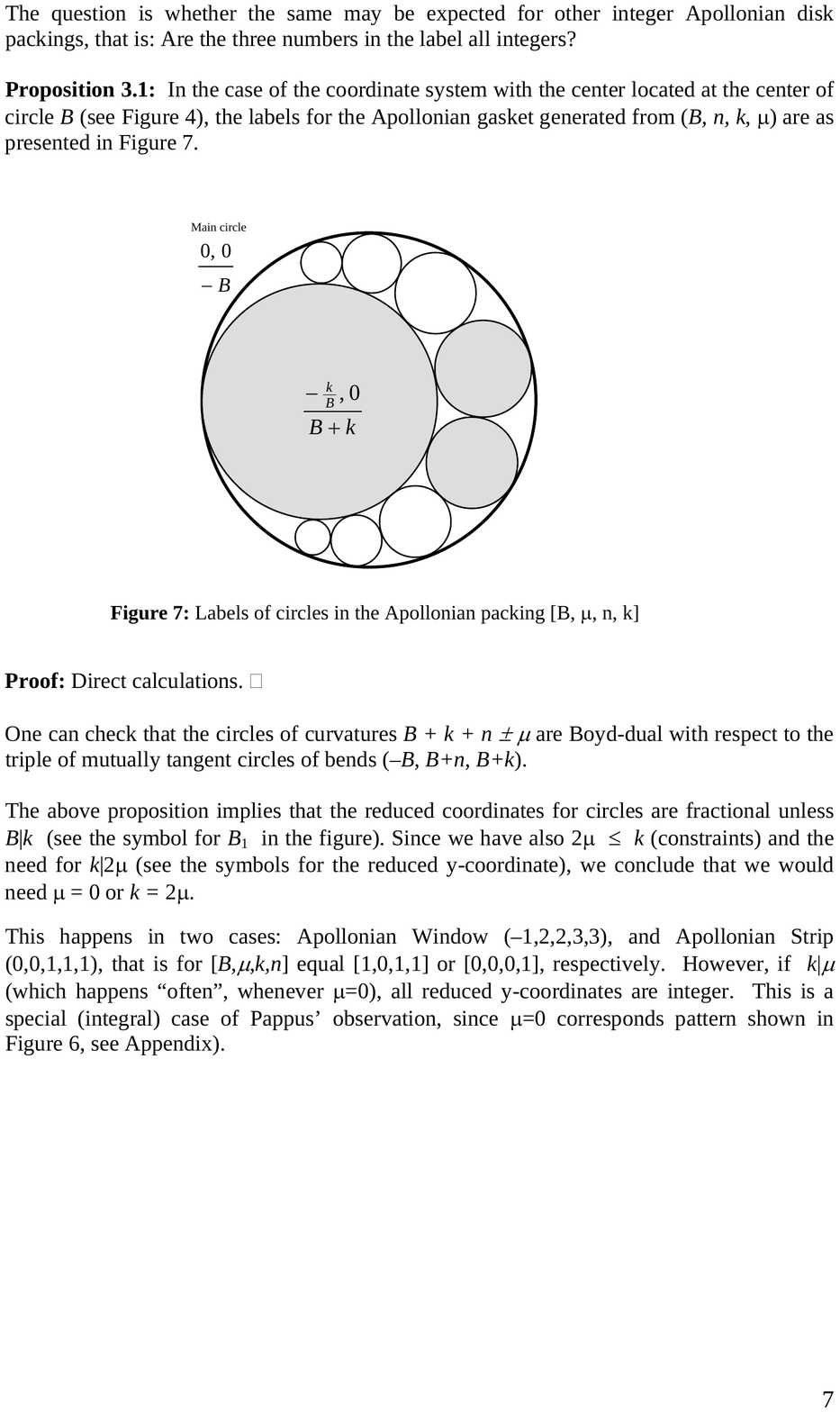} }   
\put(170,155){$\dfrac{\frac{B^2-(k+\mu)^2}{Bk}, 
	\frac{2(k+\mu)}{k}}{B+k+n+2\mu}$}
\put(190,105){$\dfrac{\frac{B^2-\mu^2}{Bk}, \frac{2\mu}{k}}
	{B+n}$}
\put(175,40){$\dfrac{\frac{B^2-(k-\mu)^2}{Bk}, 
	-\frac{2(k-\mu)}{k}}{B+k+n-2\mu}$}
\end{picture}
  \caption{Labels of circles in the Apollonian packing $[B,\,\mu,\, n,\, k]$}
  \label{fig:fig-7}
\end{figure}

\begin{proof}
Direct calculations.
\end{proof}

One can check that the circles of curvatures $B + k + n \pm\mu$ are Boyd-dual with respect to the triple of mutually tangent circles of bends $(-B,\ B+n,\ B+k)$.

The above proposition implies that the reduced coordinates for circles are fractional unless $B|k$  (see the symbol for $B_1$  in the figure). Since we have also $2\mu \leq k$ (constraints) and the need for $k|2\mu$ (see the symbols for the reduced $y$-coordinate), we conclude that we would need $\mu = 0$ or $k = 2\mu$. 

This happens in two cases: Apollonian Window $(-1,2,2,3,3)$, and Apollonian Strip $(0,0,1,1,1)$, that is for $[B,\mu,k,n]$ equal $[1,0,1,1]$ or $[0,0,0,1]$, respectively.  However, if $k|\mu$  (which happens ``often'', whenever $\mu=0$), all reduced $y$-coordinates are integer.  This is a special (integral) case of Pappus' observation, since $\mu=0$ corresponds to the pattern shown in Figure \ref{fig:fig-6}, see Appendix). 
\\

\newpage

\section{Pythagorean triples in Apollonian gaskets}
\label{s:4}

Given two tangent circles $C_1$ and $C_2$, we construct a triangle whose hypotenuse joins the centers and the other sides of which are horizontal or vertical with respect to some fixed axes.
\[
\includegraphics[width=1.5in,keepaspectratio]{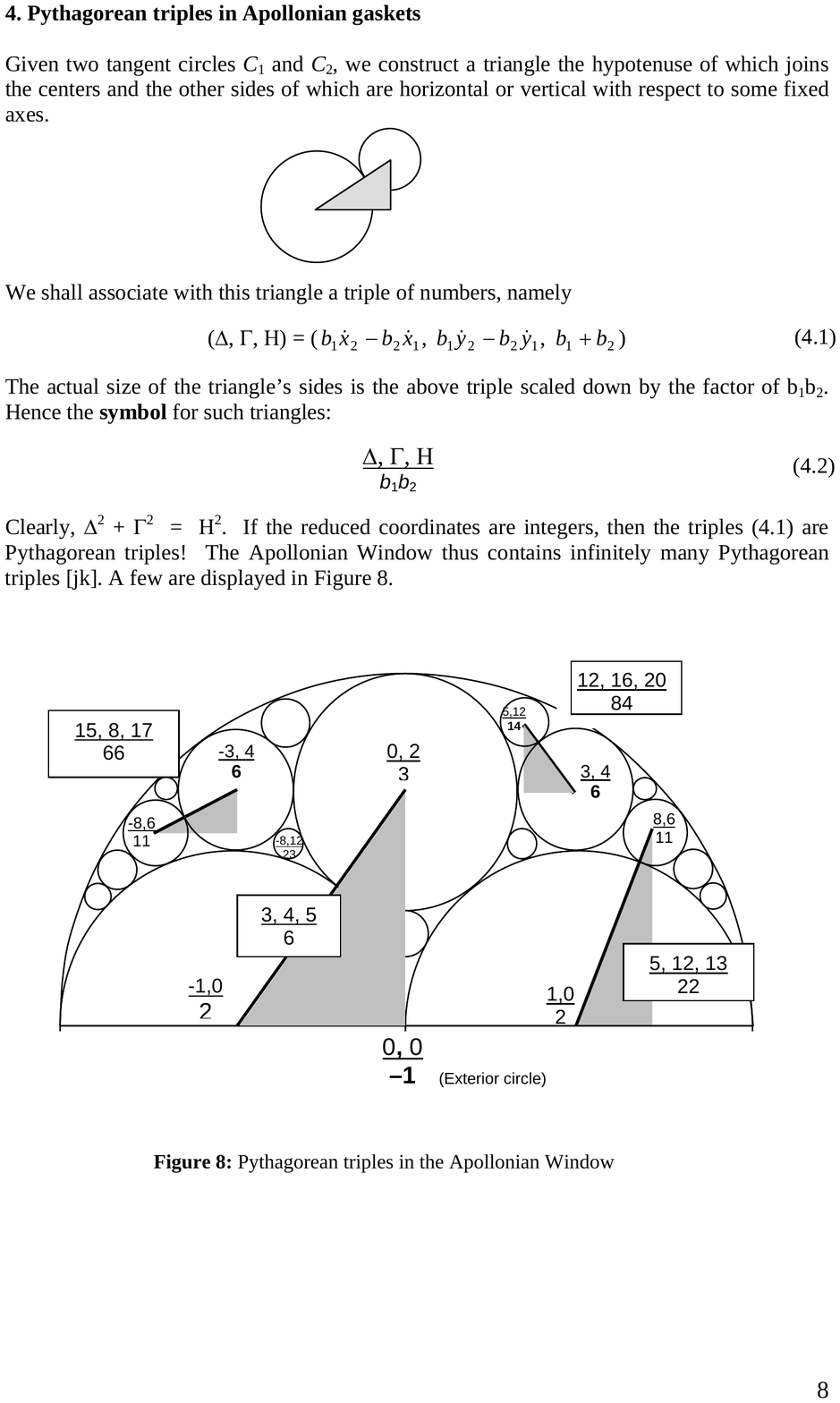}
\]
We shall associate with this triangle a triple of numbers, namely
\begin{equation}    \label{eq:4.1}
  (\Delta,\Gamma,H) = (b_1 \dot x_2 - b_2 \dot x_1,\,
	b_1 \dot y_2 - b_2 \dot y_1, \, b_1 + b_2).
\end{equation}
The actual size of the triangle's sides is the above triple scaled down by the factor of $b_1b_2$.  Hence the \textbf{symbol} for such triangles:
\begin{equation}    \label{eq:4.2}
  \dfrac{\Delta,\Gamma,H} {b_1b_2}\,.
\end{equation}
Clearly, $\Delta^2 + \Gamma^2 = H^2$.  If the reduced coordinates are integers, then the triples \eqref{eq:4.1} are Pythagorean triples!  The Apollonian Window thus contains infinitely many Pythagorean triples \cite{JK1,JK2}. A few are displayed in Figure \ref{fig:fig-8b}.

\begin{figure}[h!] %
  \centering
   \includegraphics[width=4.5in,keepaspectratio]{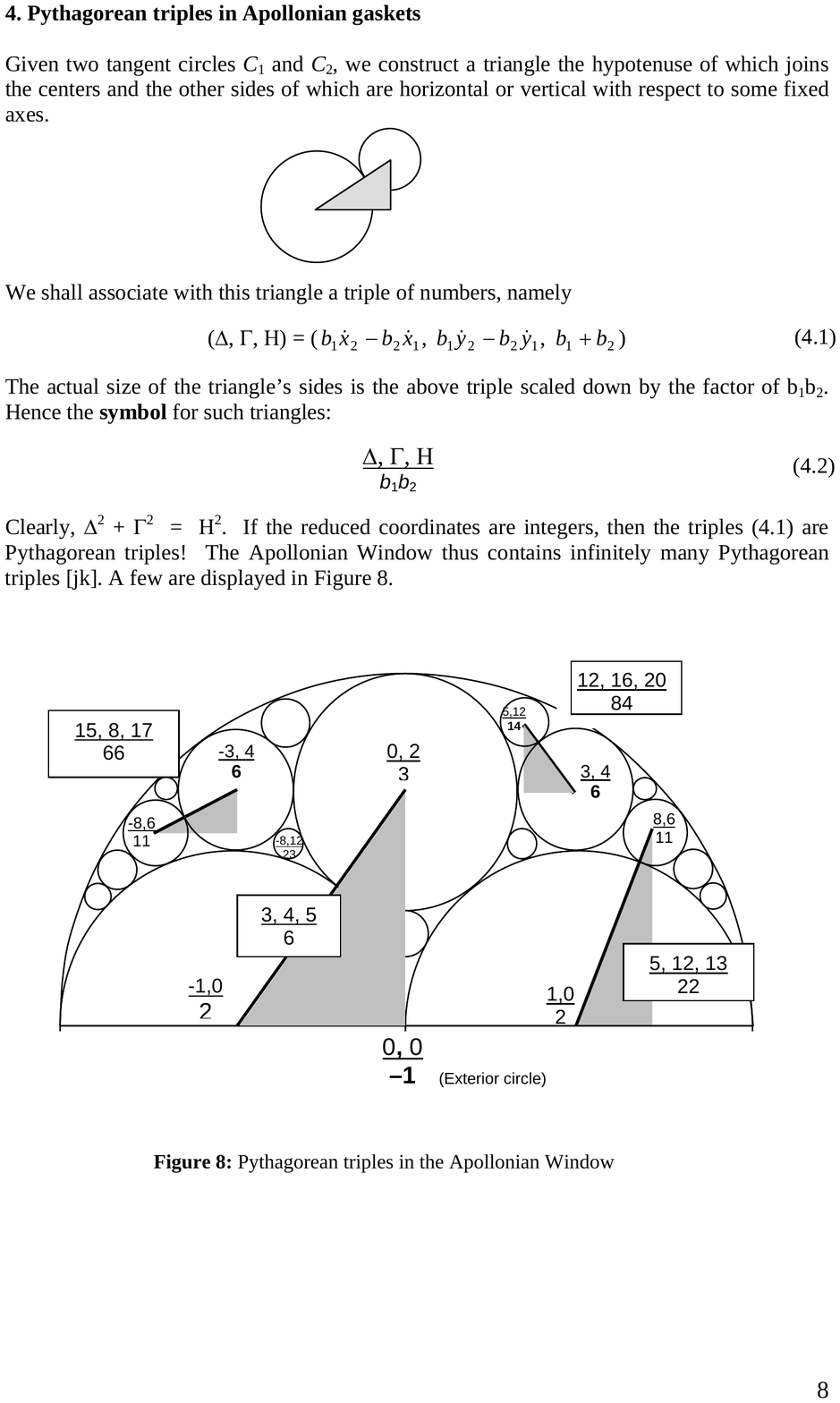} 
  \caption{Pythagorean triples in the Apollonian Window}
  \label{fig:fig-8b}
\end{figure}

Now, for the arbitrary integer packing.  Consider the four major circles $B_0$, $B_1$, $B_2$, $B_3$ in an Apollonian gasket (Figure \ref{fig:fig-6}).  Pairwise, they determine six right triangles. Each of the thick segments in Figure \ref{fig:fig-9} represents the hypotenuse of one of them.  This sextet will be called the \textbf{principal frame} for the gasket.

\begin{figure}[h!] %
  \centering
\begin{picture}(275,225)
\put(-50,0){\includegraphics[width=3.5in,keepaspectratio]{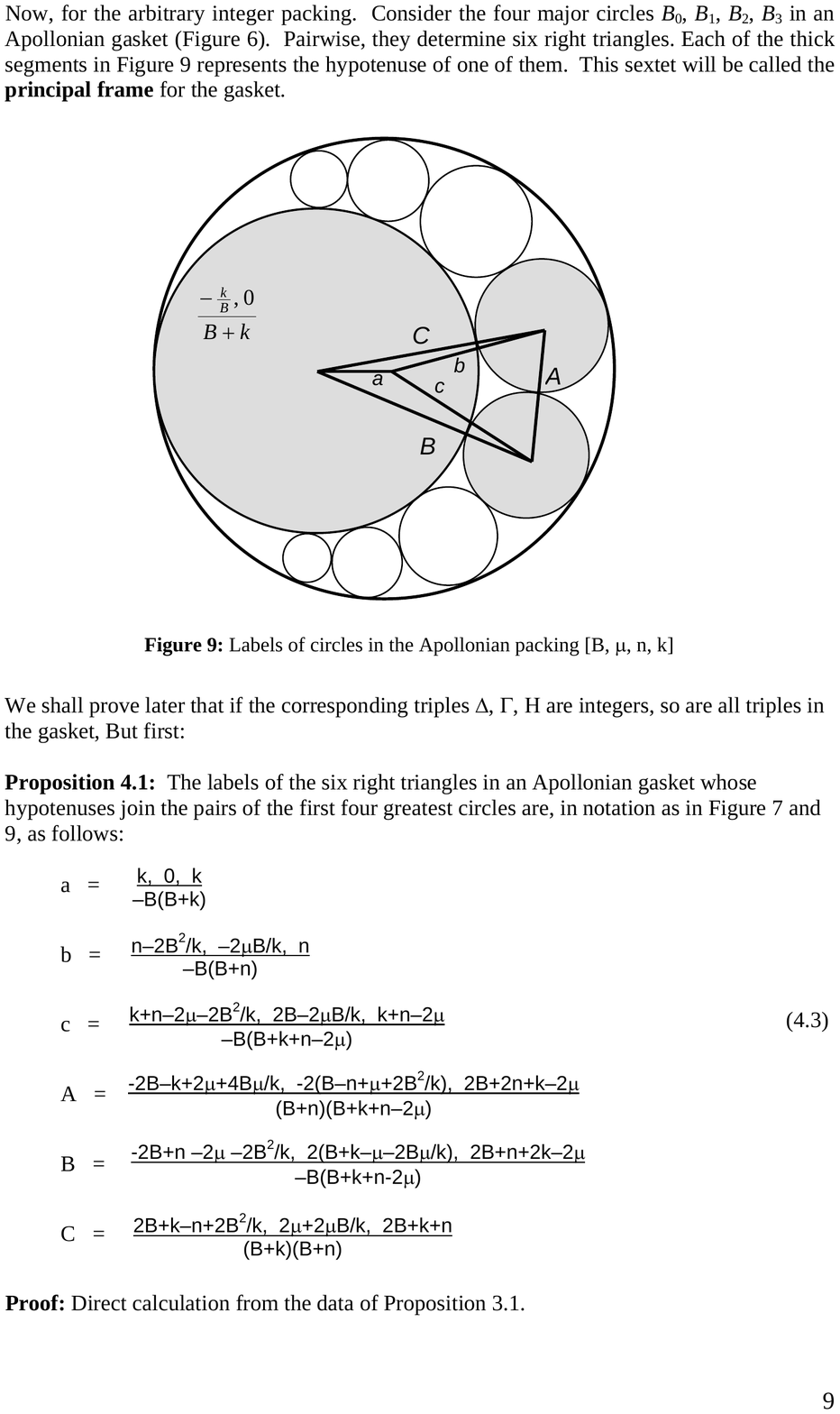}  }
\put(183,150){$\dfrac{\frac{B^2-\mu^2}{Bk}, \frac{2\mu}{k}}
	{B+n}$}
\put(168,35){$\dfrac{\frac{B^2-(k-\mu)^2}{Bk}, 
	-\frac{2(k-\mu)}{k}}{B+n+k-2\mu}$}
\end{picture}
  \caption{Labels of circles in the Apollonian packing $[B,\,\mu,\,n,\,k]$}
  \label{fig:fig-9}
\end{figure}

We shall prove later that if the corresponding triples $\Delta$, $\Gamma$, $H$ are integers, so are all triples in the gasket, But first:

\newpage

\begin{proposition}	\label{prp:4.1}
The labels of the six right triangles in an Apollonian gasket whose hypotenuses join the pairs of the first four greatest circles are, in notation as in Figure \ref{fig:fig-7} and \ref{fig:fig-9}, as follows:
\begin{equation}    \label{eq:4.3}
\begin{aligned}
  a &= \dfrac{k,\,0,\,k}{-B(B+k)} \\
  b &= \dfrac{n-2B^2/k, \ -2\mu B/k, \ n}{-B(B+n)} \\
  c &= \dfrac{k+n-2\mu-2B^2/k, \ 
	2B-2\mu B/k, \ k+n-2\mu}{-B(B+k+n-2\mu)} \\
  A &= \dfrac{-2b-k+2\mu+4B\mu/k, \ -2(B-n+\mu+2B^2/k), \ 
	2B+2n+k-2\mu}{(B+n)(B+k+n-2\mu)}	\\
  B &= \dfrac{-2B+n-2\mu-2B^2/k, \ 2(B+k-\mu-2B\mu/k), \ 
	2B+n+2k-2\mu}{-B(B+k+n-2\mu)}	\\
  C &= \dfrac{2B+k-n+2B^2/k, \ 2\mu+2\mu B/k, \ 2B+k+n}{(B+k)(B+n)}
\end{aligned}
\end{equation}
\end{proposition}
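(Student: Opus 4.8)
\medskip
\noindent\textbf{Proof proposal.}
The plan is to compute the six symbols directly from the definition \eqref{eq:4.1}--\eqref{eq:4.2}, fed by the circle symbols of Proposition \ref{prp:3.1}. In the coordinate system centered at the center of $B_0$ these are $B_0=\frac{0,\,0}{-B}$ (the origin), $B_1=\frac{-k/B,\,0}{B+k}$ (with $B_1$ lying on the horizontal axis, internally tangent to $B_0$), and the symbols of $B_2$ and $B_3$ already displayed in Figure \ref{fig:fig-7}. One cost-free preliminary remark: $B_3$ and $B_4$ are the Boyd-dual pair of $(-B,\,B+n,\,B+k)$ (the remark following Proposition \ref{prp:3.1}), so $B_0,B_1,B_2,B_3$ form a Descartes configuration and every one of the six pairs consists of tangent circles; hence each of the six triangles is genuinely right-angled and $\Delta^2+\Gamma^2=H^2$ holds identically, being $(b_1b_2)^2(1/b_1+1/b_2)^2=(b_1+b_2)^2$. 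This identity serves as a running check on the algebra.

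For the three ``radial'' triangles $a,b,c$ --- the ones with a vertex at the center of $B_0$ --- the computation collapses at once: taking $B_0$ first, \eqref{eq:4.1} becomes $\Delta=-B\,\dot x_j$, $\Gamma=-B\,\dot y_j$, $H=b_j-B$, with denominator $-Bb_j$, so one reads $\Delta,\Gamma$ off the symbol of $B_j$ and cancels the factor $B$. A single use of the master equation \eqref{eq:2.1}, in the form $\mu^2-B^2=kn-2B^2$, turns $-(B^2-\mu^2)/k$ into $n-2B^2/k$ and $\bigl((k-\mu)^2-B^2\bigr)/k$ into $k+n-2\mu-2B^2/k$, producing the stated symbols of $b$ and $c$; the symbol of $a$ needs no reduction at all.

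For the three ``chordal'' triangles $A,B,C$ --- joining pairs among $B_1,B_2,B_3$ --- I would put the two fractional reduced coordinates over the common denominator $Bk$, expand the numerators of $\Delta$ and $\Gamma$, and substitute $kn=B^2+\mu^2$ repeatedly to eliminate the terms quadratic in $k$ (for instance $nk^2=kB^2+k\mu^2$). Each such cancellation leaves an overall factor $B$ in the numerator, so the denominator drops from $Bk$ to $k$ and the numerator factors cleanly --- one meets $(B+k)^2-\mu^2$ for $C$, $(B+k)^2-(k-\mu)^2$ for $B$, and $(k-2\mu)(k+2B)$ for $A$ --- after which one further pass through \eqref{eq:2.1} brings everything to the displayed form of \eqref{eq:4.3}. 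The order chosen within each pair is the one consistent with the orientations in Figure \ref{fig:fig-9}; reversing it merely flips the sign of $\Delta$, $\Gamma$ and of the denominator and describes the same triangle.

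The only genuine difficulty is the bookkeeping in the chordal cases: keeping every term over $Bk$, invoking \eqref{eq:2.1} exactly as often as is needed for the factor $B$ to surface, and --- because \eqref{eq:2.1} lets one trade $kn$ for $B^2+\mu^2$ at will, so that several closed forms are equivalent --- selecting the normal form that appears in \eqref{eq:4.3}. Checking $\Delta^2+\Gamma^2=H^2$ at the end of each case is an effective safeguard against arithmetic slips.
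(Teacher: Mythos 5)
Your proposal is correct and follows essentially the same route as the paper: a direct calculation of the six symbols from the circle data of Proposition \ref{prp:3.1} via the definition \eqref{eq:4.1}--\eqref{eq:4.2}, with the master equation \eqref{eq:2.1} used to normalize the fractional terms to multiples of $2B^2/k$ and $2B\mu/k$. The paper's proof is exactly this ``direct calculation,'' so no further comparison is needed.
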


\begin{proof}
Direct calculation from the data of Proposition \ref{prp:3.1}.
\end{proof}

The master equation is used to bring the triples to a form where the only fractional terms contain B/k as a factor.  Note that, in general, each of the entries for the reduced coordinates is a linear combination with integer coefficients of $B, k, n$ and $\mu$ plus the only, possibly, non-integer term, a multiple of $2B\mu /k$ or $2B^2/k$.  The fact that it can be done proves the following theorem:

\begin{theorem}		\label{th:4.2}
If  $k|2B^2$ (or, equivalently $k|2B\mu$)  then all triples (4.1) in the integer Apollonian gasket (B, $\mu$, k) are integers (form Pythagorean triples). 
\end{theorem}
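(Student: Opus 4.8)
The plan is to reduce the statement to Proposition~\ref{prp:4.1} together with a propagation argument, so that the only computation needed is the one already recorded there. First I would settle the parenthetical equivalence. From the master equation $B^{2}+\mu^{2}=kn$ we get $2B^{2}=kn+(B^{2}-\mu^{2})$, so $k\mid 2B^{2}$ iff $k\mid B^{2}-\mu^{2}$. The identity $(B^{2}-\mu^{2})^{2}+(2B\mu)^{2}=(B^{2}+\mu^{2})^{2}=(kn)^{2}$ shows that $k^{2}$ divides $(B^{2}-\mu^{2})^{2}$ iff it divides $(2B\mu)^{2}$, and since $k\mid x\iff k^{2}\mid x^{2}$ for integers (compare $p$-adic valuations), $k\mid B^{2}-\mu^{2}$ iff $k\mid 2B\mu$. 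Chaining these gives $k\mid 2B^{2}\iff k\mid 2B\mu$, and from here on I assume both divisibilities hold.

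By Proposition~\ref{prp:4.1} and the remark following it, every entry of each of the six numerators $(\Delta,\Gamma,H)$ in \eqref{eq:4.3} is an integer linear combination of $B,k,n,\mu$ plus, possibly, a multiple of $2B^{2}/k$ or $2B\mu/k$; under the standing hypothesis the last term is an integer, so all six of these triples lie in $\mathbb{Z}^{3}$. Writing $s_i=(\dot x_i,\dot y_i,b_i)$ for the reduced coordinates and bend of $B_i$ and $T(s,s')=(b\dot x'-b'\dot x,\ b\dot y'-b'\dot y,\ b+b')$ for the triple \eqref{eq:4.1} of a pair, we then have $T(s_i,s_j)\in\mathbb{Z}^{3}$ for all $0\le i,j\le 3$: for $i\ne j$ this is one of the six numerators above (with the first two entries possibly sign-changed), while $T(s_i,s_i)=(0,0,2b_i)$ is integral because the gasket is.

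Now the propagation. The four circles $B_0,B_1,B_2,B_3$ form a Descartes quadruple, and the whole gasket is generated from it: every circle occurs in some Descartes quadruple obtained from $(s_0,s_1,s_2,s_3)$ by a finite sequence of Boyd-dual swaps, a single swap replacing one member $s_j$ of the current quadruple by $2(s_{i_1}+s_{i_2}+s_{i_3})-s_j$. For bends this is \eqref{eq:1.2}; for the reduced coordinates it is \eqref{eq:3.1}. Hence, by induction on the number of swaps, the triple $s_C$ of every circle $C$ is an integral combination $s_C=\sum_{i=0}^{3}m_i^{C}s_i$ with $m_i^{C}\in\mathbb{Z}$ (the coefficient vectors start as the standard basis of $\mathbb{Z}^{4}$ and stay integral under each swap). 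Since $T$ is bilinear, for any two tangent circles $C,C'$ of the gasket
\[
  (\Delta,\Gamma,H)=T(s_C,s_{C'})=\sum_{i,j=0}^{3}m_i^{C}m_j^{C'}\,T(s_i,s_j)\in\mathbb{Z}^{3},
\]
and since $\Delta^{2}+\Gamma^{2}=H^{2}$ always holds, $(\Delta,\Gamma,H)$ is a Pythagorean triple. This is exactly the assertion of the theorem.

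I expect the only real obstacle to be making the propagation step airtight: one must invoke, from the structure theory of the Apollonian group (equivalently, the extended Descartes Circle Theorem), both that every circle of the gasket is produced from $B_0,B_1,B_2,B_3$ by iterated Boyd-dual operations and that \eqref{eq:3.1} makes the reduced-coordinate pair transform under such an operation by the same linear rule that \eqref{eq:1.2} gives for the bends. Granting this, bilinearity of $T$ does the rest with no computation beyond the table of Proposition~\ref{prp:4.1}, and the number-theoretic equivalence of the first paragraph is elementary.
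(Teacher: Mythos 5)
Your argument is correct in substance and, for the core of the theorem, matches the paper's proof: the divisibility step driven by the master equation (you in fact prove the full equivalence $k\mid 2B^2\iff k\mid 2B\mu$, where the paper only runs the one direction it needs, via $k\mid 2\mu^2$ and $k^2\mid 4B^2\mu^2$), followed by the observation, quoted from the remark after Proposition~\ref{prp:4.1}, that every entry of \eqref{eq:4.3} is an integer combination of $B,k,n,\mu$ plus a multiple of $2B^2/k$ or $2B\mu/k$. Where you genuinely diverge is the propagation from the six principal triples to all triples \eqref{eq:4.1} of the gasket. The paper does not put this inside the proof of Theorem~\ref{th:4.2}: it announces it just before Proposition~\ref{prp:4.1} and delivers it in Section~\ref{s:5} as Theorem~\ref{th:5.1} and Corollary~\ref{cor:5.2}, where a Boyd-dual swap is shown to act on the sextet of frame triples by integer matrices. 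You instead write the symbol $s_C=(\dot x_C,\dot y_C,b_C)$ of an arbitrary circle as an integer combination of $s_0,s_1,s_2,s_3$ (induction over swaps, via \eqref{eq:1.2} and \eqref{eq:3.1}) and expand. Both routes rest on the same structural input --- every circle of the gasket is reached from the initial Descartes quadruple by iterated Boyd-dual swaps --- so yours is essentially a global reformulation of the paper's local frame recurrence; it buys a self-contained proof of Theorem~\ref{th:4.2} at the cost of re-deriving what the paper postpones to Section~\ref{s:5}.

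One step needs repair: $T$ is not bilinear in its third coordinate. With $H(s,s')=b+b'$, the sum $\sum_{i,j}m_i^C m_j^{C'}(b_i+b_j)$ equals $b_C+b_{C'}$ only when $\sum_i m_i^C=\sum_j m_j^{C'}=1$, and these coefficient sums fail to be $1$ already after one swap (the coefficients $2,2,2,-1$ sum to $5$), so your displayed identity is false in its last entry. The conclusion survives untouched: $\Delta$ and $\Gamma$ are genuinely bilinear, so the expansion argument applies to them, while $H=b_C+b_{C'}$ is an integer for the trivial reason that all bends in an integral gasket are integers (by \eqref{eq:1.2} and integrality of the initial four). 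Restrict the bilinearity claim to $(\Delta,\Gamma)$ and dispose of $H$ by that one-line remark, and the proof is complete.
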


\begin{proof}
The following implication is direct
\[
  k|2B^2 \Rightarrow k|(2\mu k-2\mu^2) \Rightarrow k|2\mu^2.
\]
Combining the premise with the result:  $k^2|2B^22\mu^2 \Rightarrow  k^2|4(B\mu)^2 \Rightarrow k|2\mu B$.  Every entry of \eqref{eq:4.3} may be expressed as a linear integral combination of $B$, $k$, $n$, $\mu$ and an extra term of either $2B\mu/k$ or $2B\mu/k$.  This proves the claim.  
\end{proof}

Integer Pythagorean triples happen frequently. For instance, each of the conditions: $\mu = 0$, or $k = 1$, or $k = 2$, is sufficient.

\section{Linear recurrence for Pythagorean triples in an Apollonian gasket}
\label{s:5}

Consider four circles in Descartes configuration, $C_1,\ldots ,C_4$  (see Figure \ref{fig:fig-10}).  The four centers determine six segments that we view as hypotenuses of triangles. They give, after rescaling, six Pythagorean triangles with sides denoted as follows:
\begin{equation}    \label{eq:5.1}
\begin{array}{rlll}
  \hbox{horizontal side:} &\quad 
                      \Delta_{ij}  &=  \ b_ib_j(x_i-x_j) 
	                               &= \ \dot x_ib_j - \dot x_jb_i \\
  \hbox{vertical side:} &\quad
                    \Gamma_{ij} &= \ b_ib_j(y_i-y_j) 
                                         & = \ \dot y_ib_j - \dot y_jb_i \\
  \hbox{hypotenuse:} &\quad
                               B_{ij} &= \ b_ib_j(1/b_i+1/b_j) 
                                         &= \ b_i+b_j
\end{array}
\end{equation}
with $i,j = 1,\ldots 4$.  Note that $\Delta_{ij} = -\Delta_{ji}$ and $\Gamma_{ij} = -\Gamma_{ji}$  but $B_{ij} = B_{ji}$.  They form the frame of the Descartes configuration.  

Now, complement the picture with a circle $C'_4$, Boyd-dual to $C_1$ (see Figure \ref{fig:fig-10}).  This leads to a new frame, the frame of the Descartes configuration $C'_4$, $C_2$, $C_3$, $C_4$. Quite interestingly, the elements of the new frame are linear combinations of the elements of the initial frame.

\begin{theorem}		\label{th:5.1}
Following the notation of \eqref{eq:5.1} and Figure \ref{fig:fig-7}, the transition of frames is given by
\[
\begin{aligned}
  \Delta'_{41} &= -\Delta_{41} + 2\Delta_{21} - 2\Delta_{13} \\
  \Gamma'_{41} &= -\Gamma_{41} + 2\Gamma'_{21} - 2\Gamma'_{13} \\
  B'_{41} &= -B_{41} + 2B_{12} + 2B_{13}
\end{aligned}
\]
\end{theorem}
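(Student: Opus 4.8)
The plan is to exploit the fact, already recorded in the excerpt, that Boyd's linearization \eqref{eq:1.2} lifts to the reduced coordinates via the extended Descartes Circle Theorem, equations \eqref{eq:3.1}. Concretely, if $C_1,\dots,C_4$ are in Descartes configuration and $C_4'$ is the Boyd-dual of $C_4$ with respect to $\{C_1,C_2,C_3\}$, then
\[
  \dot x_4' = 2(\dot x_1+\dot x_2+\dot x_3)-\dot x_4,\qquad
  \dot y_4' = 2(\dot y_1+\dot y_2+\dot y_3)-\dot y_4,\qquad
  b_4' = 2(b_1+b_2+b_3)-b_4 .
\]
(The statement of Theorem~\ref{th:5.1} replaces $C_4$ by $C_4'$ Boyd-dual to $C_1$; I will simply relabel so that the circle being swapped is the fourth one, or equally well carry the indices through unchanged — the formula \eqref{eq:3.1} is symmetric in which of the four is replaced.) First I would substitute these three linear relations into the definitions \eqref{eq:5.1} of $\Delta_{4'1}$, $\Gamma_{4'1}$, $B_{4'1}$, remembering that $\dot x$, $\dot y$ are the genuinely linear objects here while the unreduced centers $x_i=\dot x_i/b_i$ are not.

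For the hypotenuse entry this is immediate: $B_{4'1}=b_4'+b_1 = 2(b_1+b_2+b_3)-b_4+b_1$, and rewriting $b_i+b_j=B_{ij}$ one gets $B_{4'1} = -B_{41}+2B_{12}+2B_{13}$ once the stray $b_i$'s are grouped correctly; I would just verify the bookkeeping of which pairs appear. For the horizontal side, $\Delta_{4'1} = \dot x_{4'} b_1 - \dot x_1 b_{4'}$; plugging in the two substitutions gives $\bigl(2(\dot x_1+\dot x_2+\dot x_3)-\dot x_4\bigr)b_1 - \dot x_1\bigl(2(b_1+b_2+b_3)-b_4\bigr)$. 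Expanding, the two $2\dot x_1 b_1$ terms cancel, and the survivors regroup as $-(\dot x_4 b_1-\dot x_1 b_4) + 2(\dot x_2 b_1-\dot x_1 b_2) + 2(\dot x_3 b_1 - \dot x_1 b_3)$, which is exactly $-\Delta_{41}+2\Delta_{21}+2\Delta_{31} = -\Delta_{41}+2\Delta_{21}-2\Delta_{13}$ using the antisymmetry $\Delta_{13}=-\Delta_{31}$. The vertical side is the verbatim same computation with $\dot y$ in place of $\dot x$, yielding $\Gamma_{4'1} = -\Gamma_{41}+2\Gamma_{21}-2\Gamma_{13}$. (I note the $\Gamma$ line in the theorem statement carries primes on $\Gamma'_{21},\Gamma'_{13}$ that look spurious; the clean derivation produces the unprimed frame elements, and I would state it that way or flag it as a typo.)

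The only genuine subtlety — and the step I expect to need the most care — is justifying that \eqref{eq:3.1} really does hold for the dual being taken here, i.e. that the reduced coordinates of a Descartes quadruple satisfy a Descartes-type quadratic in each slot so that the ``other root'' is the affine reflection $2(\text{sum of the other three})-(\text{current value})$. This is precisely the content of the extended Descartes Circle Theorem of \cite{GLMWY1,GLMWY2,GLMWY3}, which the paper has already invoked for \eqref{eq:3.1}; so the honest move is to cite it and observe that the computation above is then purely formal linear algebra. Everything else is the routine expansion sketched above, and since the claim is a triple of identities that are linear (not quadratic) in the data, there is no case analysis and no positivity constraint to worry about — the result holds for any Descartes configuration, integral or not. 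I would close by remarking that this is the engine behind the linear recurrence of the section's title: iterating these $3\times 3$ affine maps over the four swap-moves generates all Pythagorean triples of the gasket from the principal frame of Proposition~\ref{prp:4.1}.
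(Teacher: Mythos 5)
Your proof is correct and follows essentially the same route as the paper: substitute the Boyd-dual relations \eqref{eq:1.2} and \eqref{eq:3.1} for $\dot x'_4,\dot y'_4,b'_4$ into the definitions \eqref{eq:5.1}, expand, and regroup using the antisymmetry $\Delta_{31}=-\Delta_{13}$ (and the analogous grouping of bends for $B'_{41}$). You are also right that the primes on $\Gamma'_{21},\Gamma'_{13}$ in the statement are typographical; the paper's own computation produces the unprimed frame elements, exactly as you derive.
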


\begin{proof}
Using the linear relations \eqref{eq:1.2} and \eqref{eq:3.1}, we can express the position and the curvature of the new, fifth, circle in terms of the initial four:  
\[
\begin{aligned}
  \dot x'_4 &= 2\dot x_1 + 2\dot x_2 + 2\dot x_3 - \dot x_4 \\
  \dot y'_4 &= 2\dot y_1 + 2\dot y_2 + 2\dot y_3 - \dot y_4 \\
  b'_4 &= 2b_1 + 2b_2 + 2b_3 - b_4
\end{aligned}
\]
We can calculate the Pythagorean vectors for each pair $C'_4C_i$,  $i=1,2,3$. By some magic, due to the following regroupings and adding zeros, these can be expressed in terms of the initial four Pythagorean vectors only, as shown here for the pair $C'_4C_1$:
\[
\begin{aligned}
  \Delta'_{41} &= \dot x'_4 b_1 - \dot x_1b'_4 \\
	&= (2\dot x_1 + 2\dot x_2 + 2\dot x_3 - \dot x_4)
		b_1 - \dot x_1(2b_1 + 2b_2 + 2b_3 - b_4) \\
	&= \dot x_1b_4 - \dot x_4b_1 + 2(\dot x_2b_1 
		- \dot x_1b2_1) + 2(\dot x_3b_1 - \dot x_1b_3) \\
	&= -Delta_{41} + 2\Delta_{21} - 2\Delta_{13} \, .
\end{aligned}
\]
The $\Gamma$'s follow the same argument.  Only $B'$s are slightly different: 
\[
\begin{aligned}
  B'_{41} &= b'_4 + b_1 \\
	&= -b_4 + 3b_1 + 2b_2 + 2b_3  \\
	&= 4b_1 + 2b_2 + 2b_3 - (b_1+b_4)  \\
	&= -B_{41} + 2B_{12} + 2B_{13} \, .
\end{aligned}
\]
Thus the transformation is linear in the entries of $\Delta$, $\Gamma$, and, $B$, with integer coefficients, as stated.  
\end{proof}

\begin{figure}[h!] %
  \centering
  \includegraphics[width=3.5in,keepaspectratio]{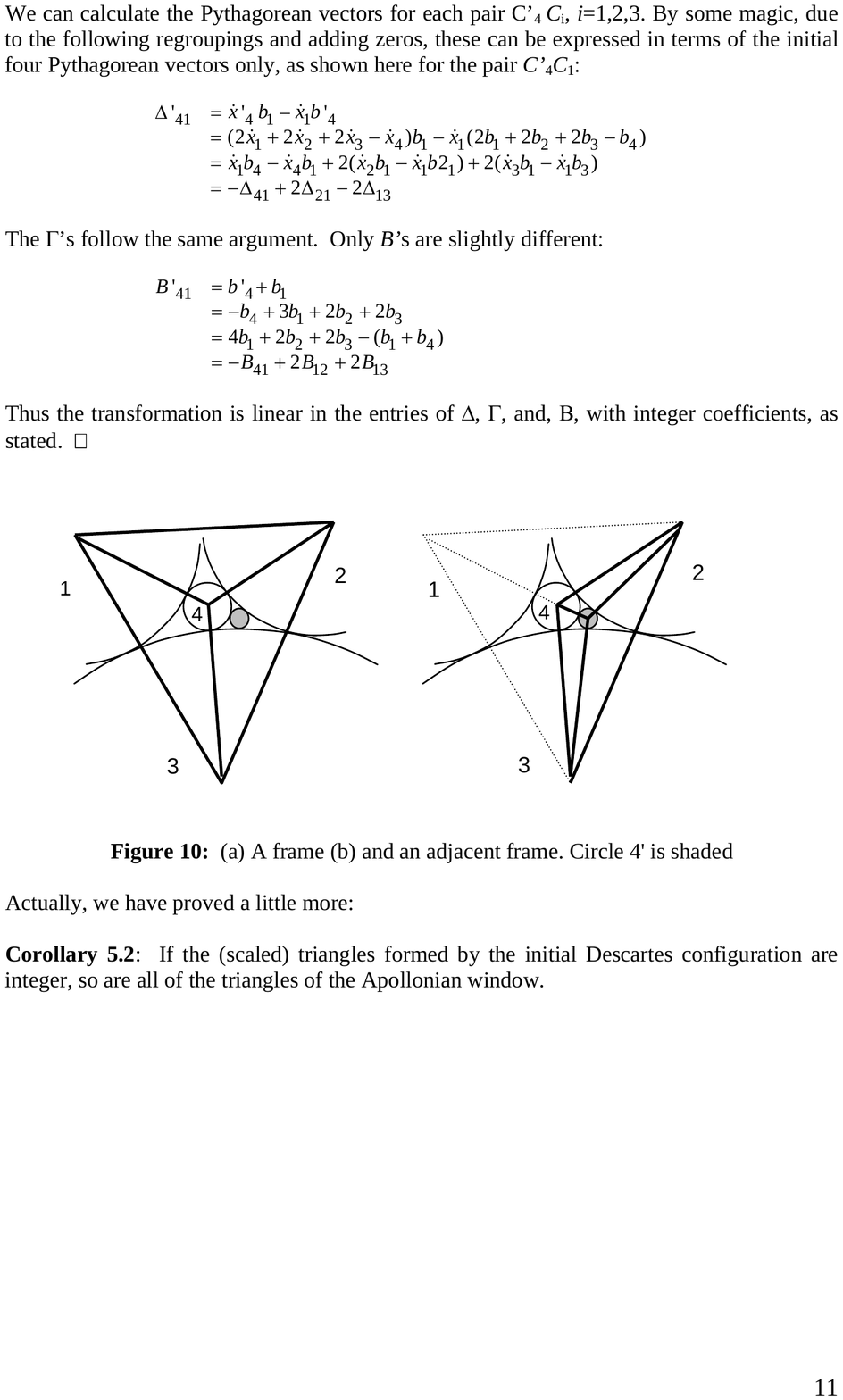}
  \caption{(a) A frame (b) and an adjacent frame.  Circle 4$'$ is shaded.}
  \label{fig:fig-10}
\end{figure}

Actually, we have proved a little more:

\begin{corollary}	\label{cor:5.2}
If the (scaled) triangles formed by the initial Descartes configuration are integer, so are all of the triangles of the Apollonian window.
\end{corollary}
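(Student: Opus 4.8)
The plan is to turn the single transition formula of Theorem~\ref{th:5.1} into an induction over the recursive construction of the gasket. Recall that an Apollonian gasket is obtained from its first four mutually tangent circles $C_1,\dots,C_4$ by repeatedly \emph{swapping}: given any Descartes quadruple of circles already present, replacing one of its four members by its Boyd-dual produces another Descartes quadruple of circles of the gasket, and every circle of the gasket is reached from the initial quadruple after finitely many such swaps. Each circle is thus a member of at least one Descartes quadruple in this tree, and likewise every pair of mutually tangent circles of the gasket occurs together in some such quadruple; consequently every scaled triangle $(\Delta,\Gamma,H)$ appearing anywhere in the gasket is one of the six entries of the frame of some quadruple reachable from the initial one. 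So it suffices to show that every such frame consists of integer triples.

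For that I would first record the three companions of Theorem~\ref{th:5.1}. That theorem computes the frame of the quadruple $(C'_4,C_2,C_3,C_4)$ --- obtained by replacing $C_1$ by its Boyd-dual --- as explicit $\mathbb{Z}$-linear combinations of the entries of the frame of $(C_1,C_2,C_3,C_4)$. Relabeling the four indices, the very same computation gives, for the swap that replaces $C_2$ (resp.\ $C_3$, resp.\ $C_4$), a formula expressing the new frame through the old one with coefficients that are again integers. This is legitimate because the derivation of Theorem~\ref{th:5.1} uses nothing beyond the linear Boyd relations \eqref{eq:1.2} and \eqref{eq:3.1}, which are symmetric in the four circles, together with the ``regrouping and adding zeros'' trick; nothing in it depends on which circle is being dualized, and in particular no division by a curvature ever occurs. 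Hence each of the four elementary swap moves acts on the sextet of triples $(\Delta_{ij},\Gamma_{ij},B_{ij})$ attached to a frame by a matrix with integer entries.

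The corollary now follows by induction on the number of swaps needed to produce a given frame from the initial one. The base case is precisely the hypothesis: the six triples of the initial Descartes configuration are integer. For the inductive step, if a frame has all integer entries, then applying any one of the four swap maps --- each given by an integer matrix --- yields a frame again with all integer entries. Therefore every frame in the tree, and in particular every scaled triangle occurring in the gasket, has integer $\Delta$, $\Gamma$, $H$; since $\Delta^2+\Gamma^2=H^2$, these are Pythagorean triples. (The statement is phrased for the Apollonian Window, but the argument uses only integrality of the starting frame, so it applies to any Apollonian gasket whose initial frame is integral.)

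The only real work, and the step I would be most careful about, is the claim that the three permuted versions of Theorem~\ref{th:5.1} hold with integer coefficients: the theorem as stated singles out the swap at $C_1$ (with $C_2,C_3$ as the ``other two''), so one must check that relabeling genuinely reproduces the same type of formula. Once it is observed that the proof invokes only the index-symmetric relations \eqref{eq:1.2}, \eqref{eq:3.1} and integer arithmetic, this is immediate, but it deserves to be said explicitly rather than swept under ``by symmetry.'' The structural fact that every tangent pair of gasket circles co-occurs in some Descartes quadruple reachable by swaps is standard for Apollonian packings and I would simply cite it.
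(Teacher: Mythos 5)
Your proposal is correct and is essentially the paper's own argument: the paper derives Corollary \ref{cor:5.2} directly from Theorem \ref{th:5.1} (``we have proved a little more''), with the relabeled swap moves you describe appearing explicitly afterwards as the integer transition matrices $A$, $B$, $C$, and the integrality then propagating by induction over successive frames exactly as you set it up. Your write-up merely makes explicit the induction and the reachability of every tangent pair within some frame, which the paper leaves implicit.
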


\textbf{Matrix description:}  Now we may rephrase our findings on transitions between frames in terms of matrices.  The data for the Pythagorean triangles in each frame may be expressed as three vectors (columns), e.g., for the initial frame: $\Delta = [\Delta_{41}, \Delta_{42}, \Delta_{43}, \Delta_{12}, \Delta_{23}, \Delta_{31}]^{T}$ and similarly for $\Gamma$ and  $B$. Similarly for the second frame we have $\Delta'$,  $\Gamma'$ and  $B'$.

Here are the three matrices of the transition from a frame to the corresponding subsequent frame:
\[
  A = \begin{bmatrix}
	\hbox{\small\begin{tabular}{rrr|rrr}
	1 &0 &0 &0 &0 &0 \\
	0 &0 &0 &0 &0 &1 \\
	0 &0 &0 &0 &$-$1 &0 \\ \hline
	0 &0 &0 &1 &$-$2 &$-$2 \\
	2 &0 &1 &0 &2 &0 \\
	$-$2 &$-$1 &0 &0 &0 &2 
		\end{tabular} }
	\end{bmatrix}
\quad
  B = \begin{bmatrix}
	\hbox{\small\begin{tabular}{rrr|rrr}
	0 &0 &0 &0 &0 &$-$1 \\
	0 &1 &0 &0 &0 &0 \\
	0 &0 &0 &1 &0 &0 \\ \hline
	0 &$-$2 &$-$1 &2 &0 &0 \\
	0 &0 &0 &$-$2 &1 &$-$2 \\
	1 &2 &0 &0 &0 &2 
		\end{tabular} }
	\end{bmatrix}
\]
\[
  C = \begin{bmatrix}
	\hbox{\small\begin{tabular}{rrr|rrr}
	0 &0 &0 &0 &1 &0 \\
	0 &0 &0 &$-$1 &0 &0 \\
	0 &0 &1 &0 &0 &0 \\ \hline
	0 &1 &2 &2 &0 &0 \\
	$-$1 &0 &$-$2 &0 &2 &0 \\
	0 &0 &0 &$-$2 &2 &1 
		\end{tabular} }
	\end{bmatrix}
\]
(vertical and horizontal lines in the matrices are drawn for easier inspection).   Now acting with them on the initial vectors $\Delta$ and $\Gamma$ one may reconstruct all the interior triangles.  Vectors  $\Delta$ and $\Gamma$ may be combined into a single matrix, $T = [\Delta|\Gamma]$.   
\[
  T = \begin{bmatrix}
	\Delta_{41}  &\Gamma_{41}  \\
	\Delta_{42}  &\Gamma_{42}  \\
	\Delta_{43}  &\Gamma_{43}  \\
	\Delta_{12}  &\Gamma_{12}  \\
	\Delta_{23}  &\Gamma_{23}  \\
	\Delta_{31}  &\Gamma_{31}  \\
	\end{bmatrix}
\quad
  B' = \begin{bmatrix}
	B_{41}  \\
	B_{42}  \\
	B_{43}  \\
	B_{12}  \\
	B_{23}  \\
	B_{31}
	\end{bmatrix}
\]
The vector $B$ is transformed to $B'$ by matrices like $A, B, C$, where all of the entries are to be  replaced by their absolute values.

We conclude with two examples of Apollonian packings in which every triangle for adjacent circles determines a Pythagorean triple, see Figures \ref{fig:fig-10} and \ref{fig:fig-11}.  Only some of the triangles are displayed.

\section{Concluding remarks}
\label{s:6}

Integral Apollonian disk packings have a topic of much interest for a while \cite{GLMWY1}-\cite{GLMWY3}, \cite{Sar1}--\cite{Sar2} and their occurrence has been analyzed. Yet it is not clear whether the integral Apollonian disk packings admit coordinate systems in which the reduced coordinates are integral, except the known cases of Apollonian Strip and Apollonian Window.  We have however a frequent occurrence of gaskets that admit integral Pythagorean triples, constructed in Sections \ref{s:4} and \ref{s:5}. 

Yet another intriguing remark concerns the master equation 
\[
  B^2 + \mu^2 - nk = 0.
\]
If we see it as $B^2 + \mu^2 = \hbox{hypotenuse}^2$  --  one may identify the corresponding (scaled) triangle in Figure \ref{fig:fig-4} as the one with legs being the diameter $2/B$ of the encompassing circle and altitude $h$ of the center of circle $B_2$ -- the inversive image of circle $B_2$.  Each solution to the master equation may also be viewed as an integral isotropic vector $[B, \mu, n, k]^T$ in the Minkowski space $\mathbb R^{3,1}$  \textit{a la} Pedoe map from circles to vectors, see \cite{JK2}.  Such vectors represent points in this Minkowski space -- degenerated circles of radius 0.

\begin{figure}[tbp] %
  \centering
  \includegraphics[width=5.67in,keepaspectratio]{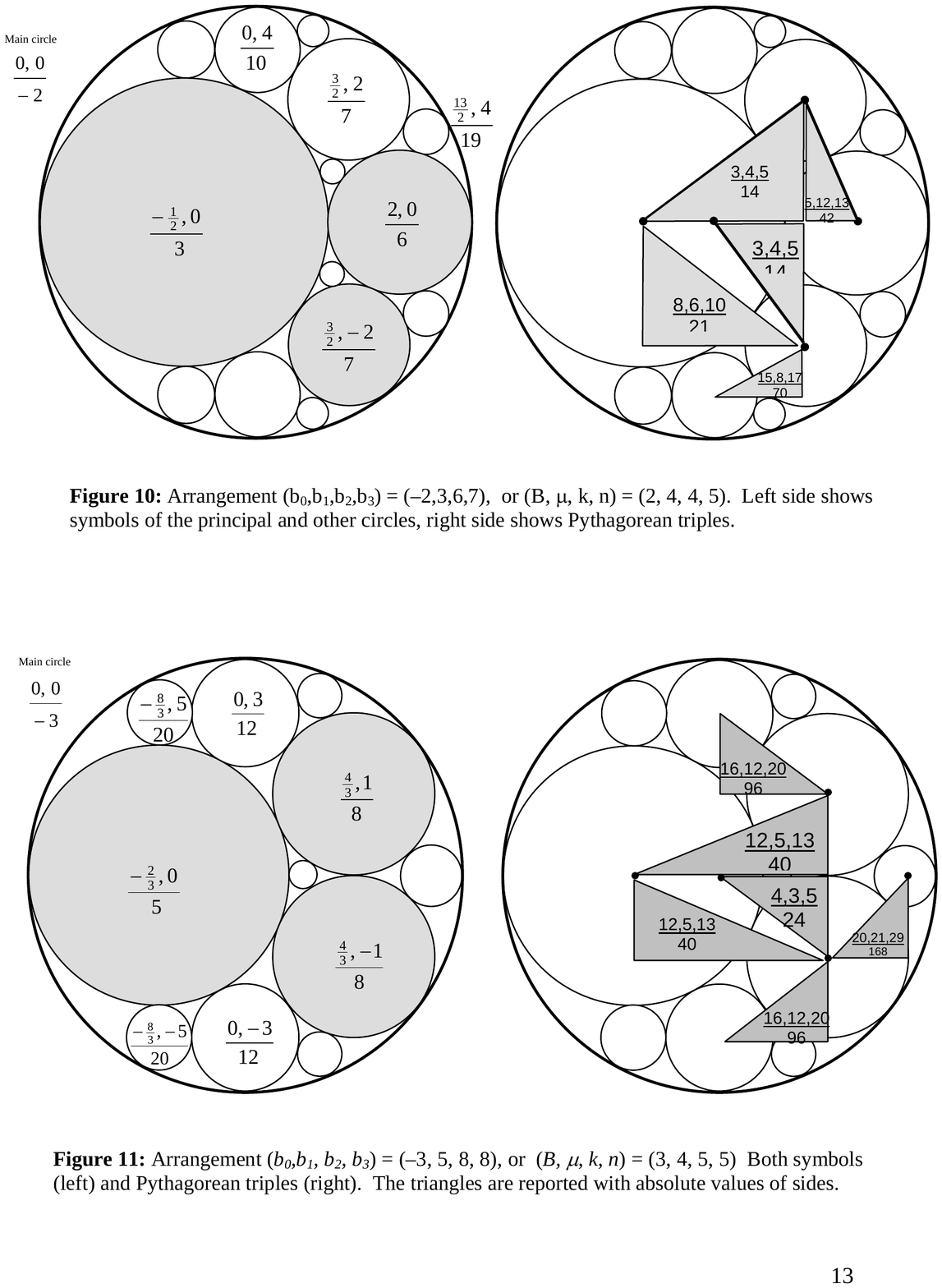}
  \caption{Arrangement $(b_0,b_1,b_2,b_3) = (-2,3,6,7)$ or $(B,\mu,k,n) = (2,4,4,5)$.  Left side shows symbols of the principal and other circles, right side shows Pythagorean triples.}
  \label{fig:fig-11}
\end{figure}

\begin{figure}[tbp] %
  \centering
  \includegraphics[width=5.67in,keepaspectratio]{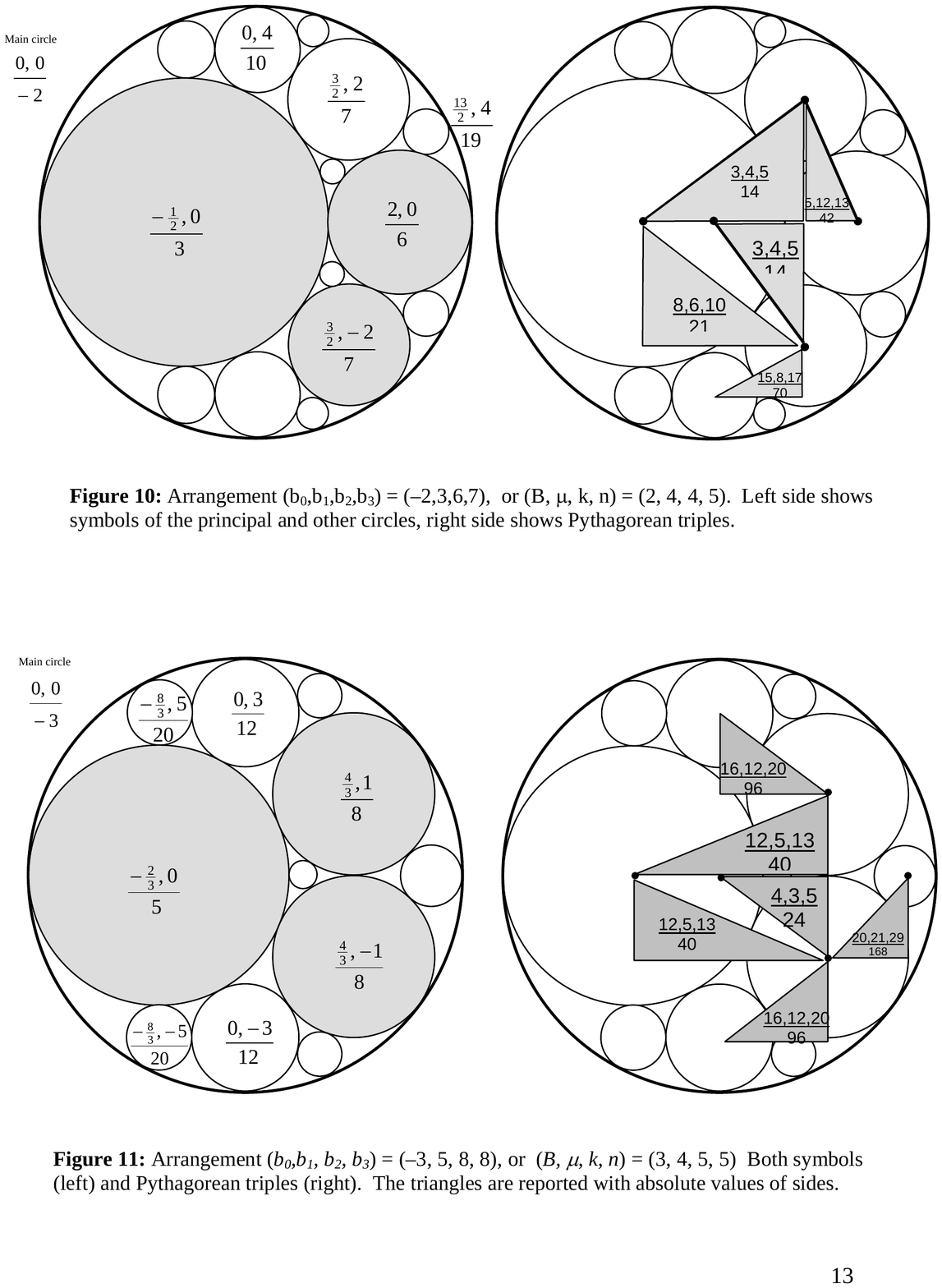}
  \caption{Arrangement $(b_0,b_1,b_2,b_3) = (-3,5,8,8)$ or $(B,\mu,k,n) = (3,4,5,5)$.  Both symbols (left) and  Pythagorean triples (right).  The triangles are reported with absolute values of sides.}
  \label{fig:fig-12}
\end{figure}

\clearpage

\renewcommand{\thefigure}{A\arabic{figure}}
\setcounter{figure}{0}
\renewcommand{\thetable}{A\arabic{table}}
\setcounter{table}{0}

\appendix
\section{List of integral packings}
\label{s:apndx}

Here we provide a list of the first 200 integer Apollonian packings.  First, however, a few words on symmetry of these arrangements.  

\begin{figure}[h!] %
  \centering
  \includegraphics[width=4.5in,keepaspectratio]{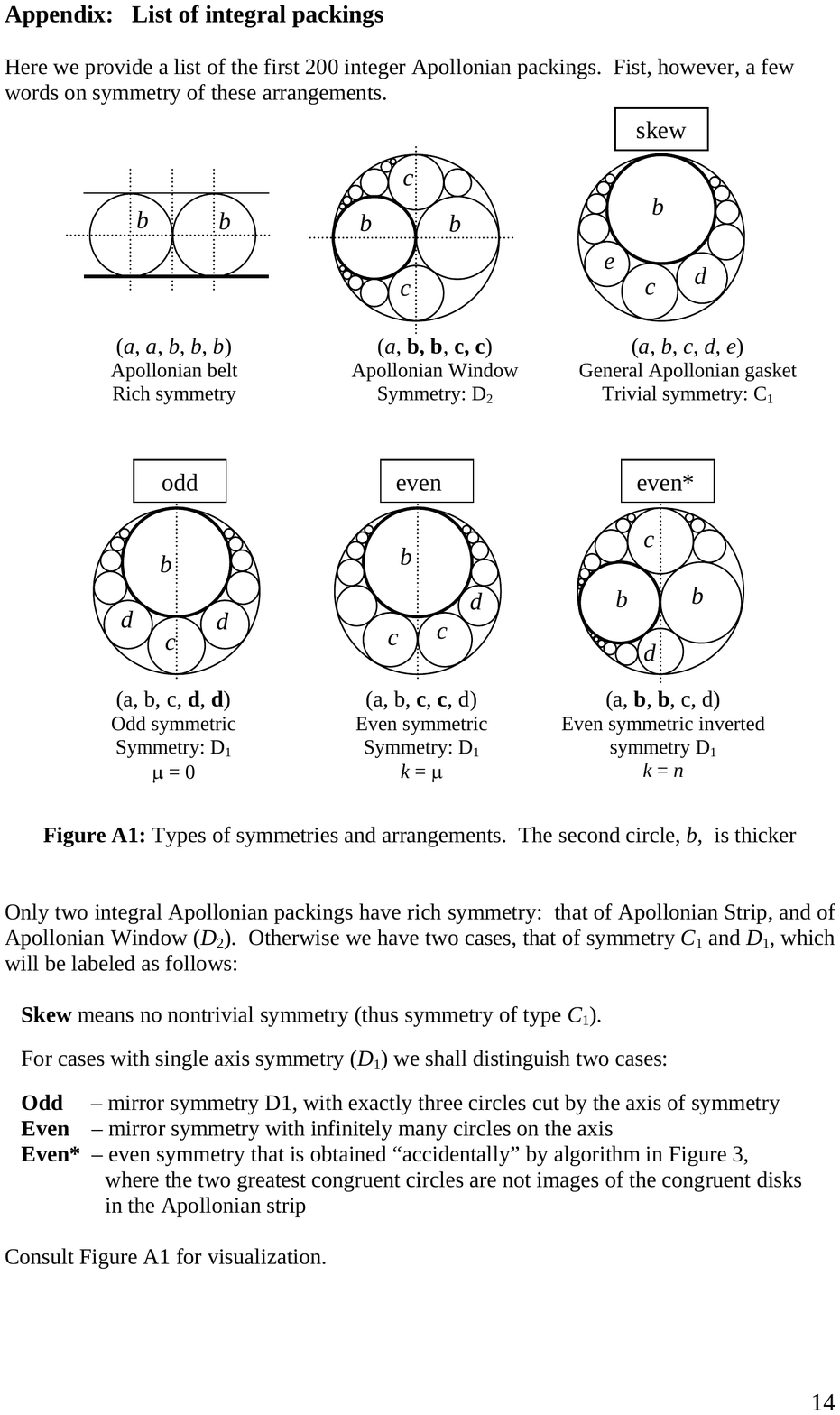}
  \includegraphics[width=4.5in,keepaspectratio]{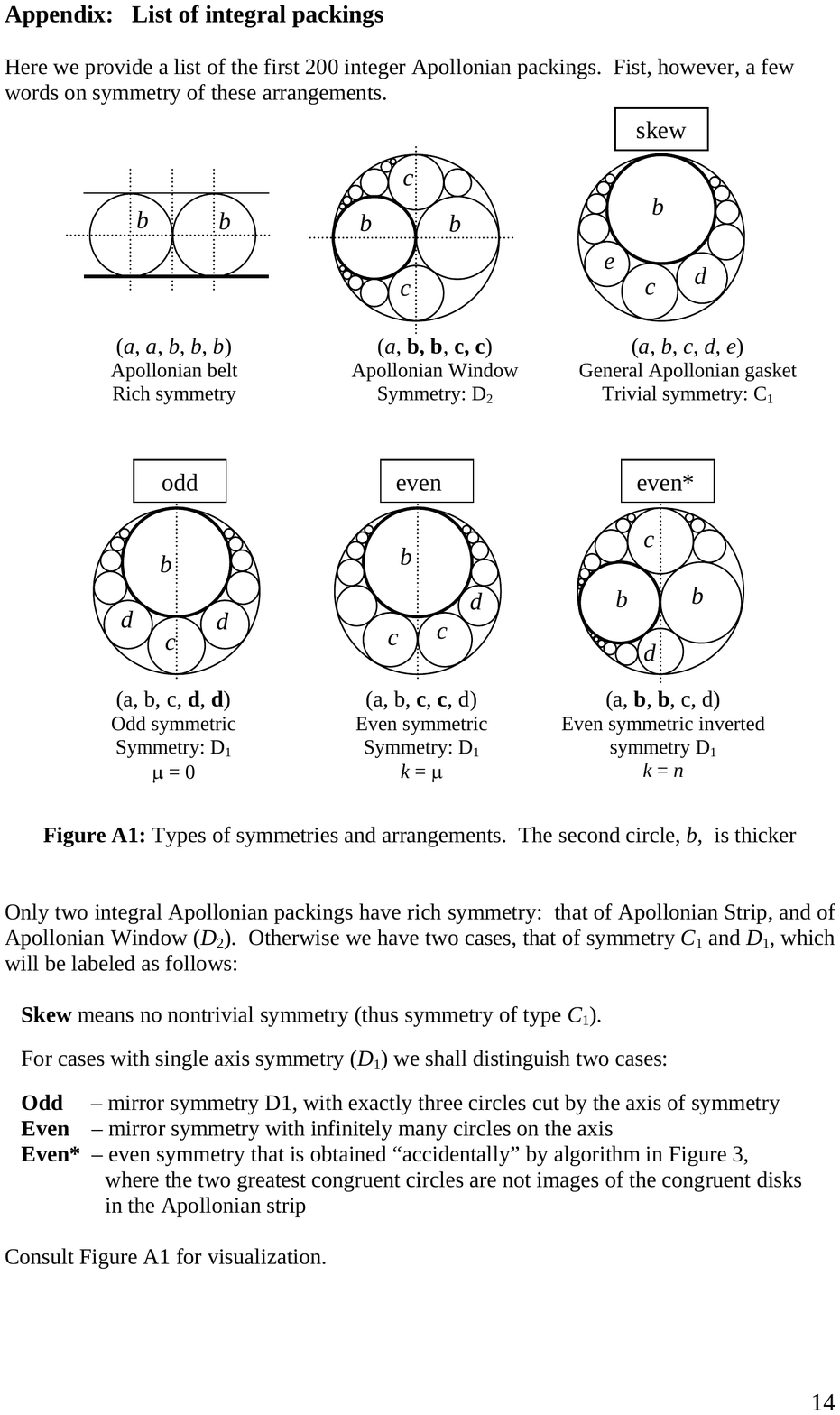}
  \caption{Types of symmetries and arrangements.
	The second circle, $b$, is thicker.}
  \label{fig:fig-14a}
\end{figure}

Only two integral Apollonian packings have rich symmetry:  that of Apollonian Strip, and of Apollonian Window $(D_2)$.  Otherwise we have two cases, that of symmetry $C_1$ and $D_1$, which will be labeled as follows: 

\begin{enumerate}
\item[]\textbf{Skew} means no nontrivial symmetry (thus symmetry of type $C_1$). 
\end{enumerate}

 For cases with single axis symmetry ($D_1$) we shall distinguish two cases: 
\begin{enumerate}
\item[]\textbf{Odd} \ \ -- mirror symmetry $D_1$, with exactly three circles cut by the axis of symmetry

\item[]\textbf{Even} \ -- mirror symmetry with infinitely many circles on the axis

\item[]\textbf{Even*} -- even symmetry that is obtained ``accidentally'' by algorithm in Figure~\ref{fig:fig-3a}, where the two greatest congruent circles are not images of the congruent disks in the Apollonian strip
\end{enumerate}

Consult Figure A1 for visualization. 

Yet another attribute of an Apollonian packing is a \textbf{shift}, which we define as the degree the disk on the horizontal axis in the Apollonian strip is raised above this axis in terms of the fraction of its radius.  
\[
  \hbox{shift} = h/\rho = \dfrac{2\mu}{k}\,.
\]
See Figure \ref{fig:fig-4}.  It has values in the interval [0,1] and it measures the degree the gasket is off the axial symmetry:  $0$ for \textit{odd} symmetry $D_1$ and $1$ for \textit{even} symmetry $D_1$. The fractional value indicates lack of axial symmetry, except the cases of ``accidental'' even* symmetry, when $n = k$ (this gives $B_1 = B_2$).

Table A1 displays first 24 integral Apollonian gaskets including symmetry type, principal disk quintet, the parameters $B$, $k$, $n$, $\mu$, and the shift factor. The first even* case is also included.

Table A2 contains a longer list of integer Apollonian gaskets for the principal curvatures from 1 through 32.  Curvature quintets (in brackets) are followed by shift factor.

\begin{table}[htbp] %
  \centering
  \includegraphics[width=5.67in,keepaspectratio]{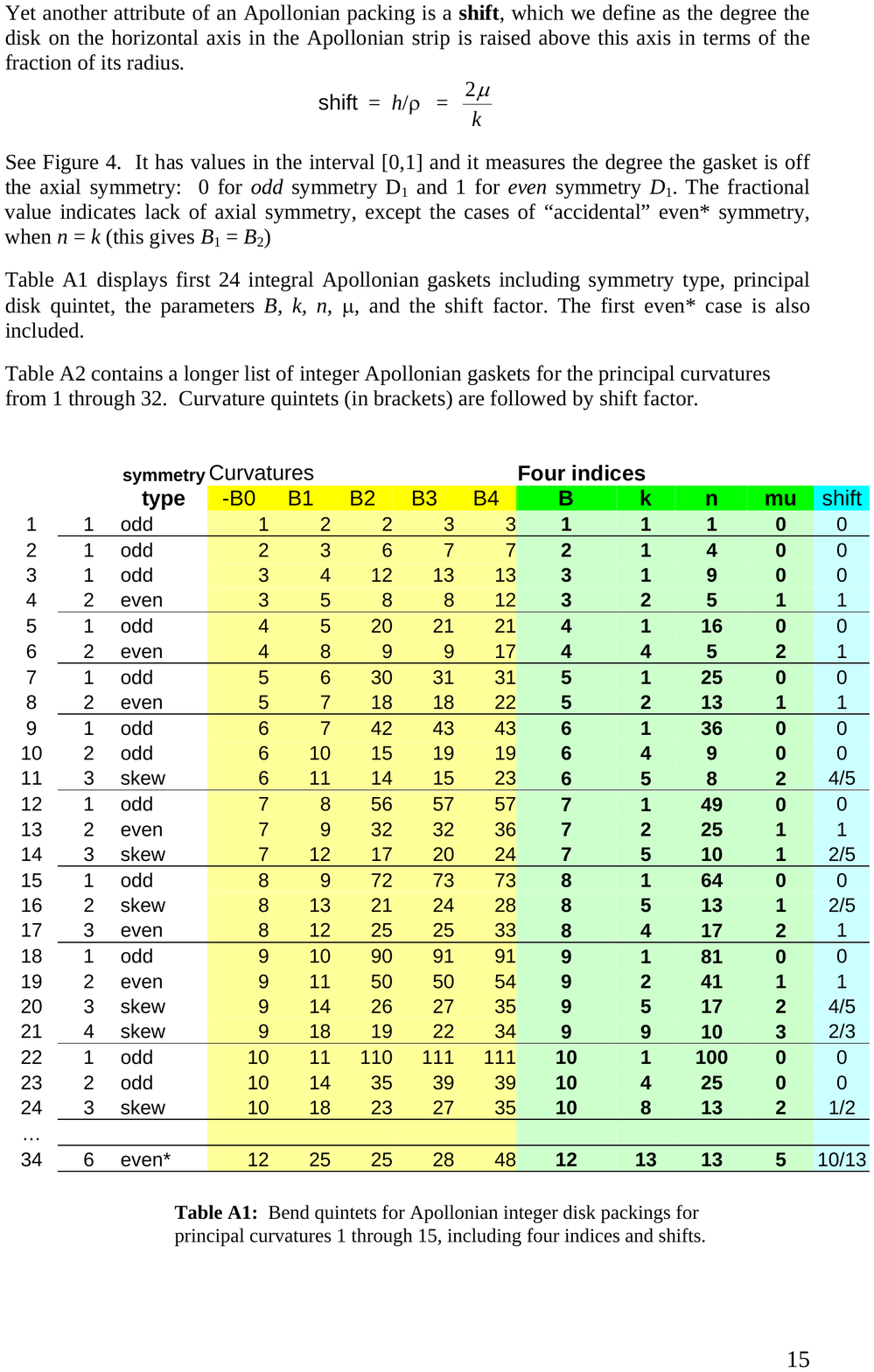}
  \caption{Bend quintets for Apollonian integer disk packings for principal curvatures 1 through 15, including four indices and shifts.}
  \label{fig:tbl-A1}
\end{table}

\begin{table}[htbp] 
  \centering
  \includegraphics[width=5.67in,keepaspectratio]{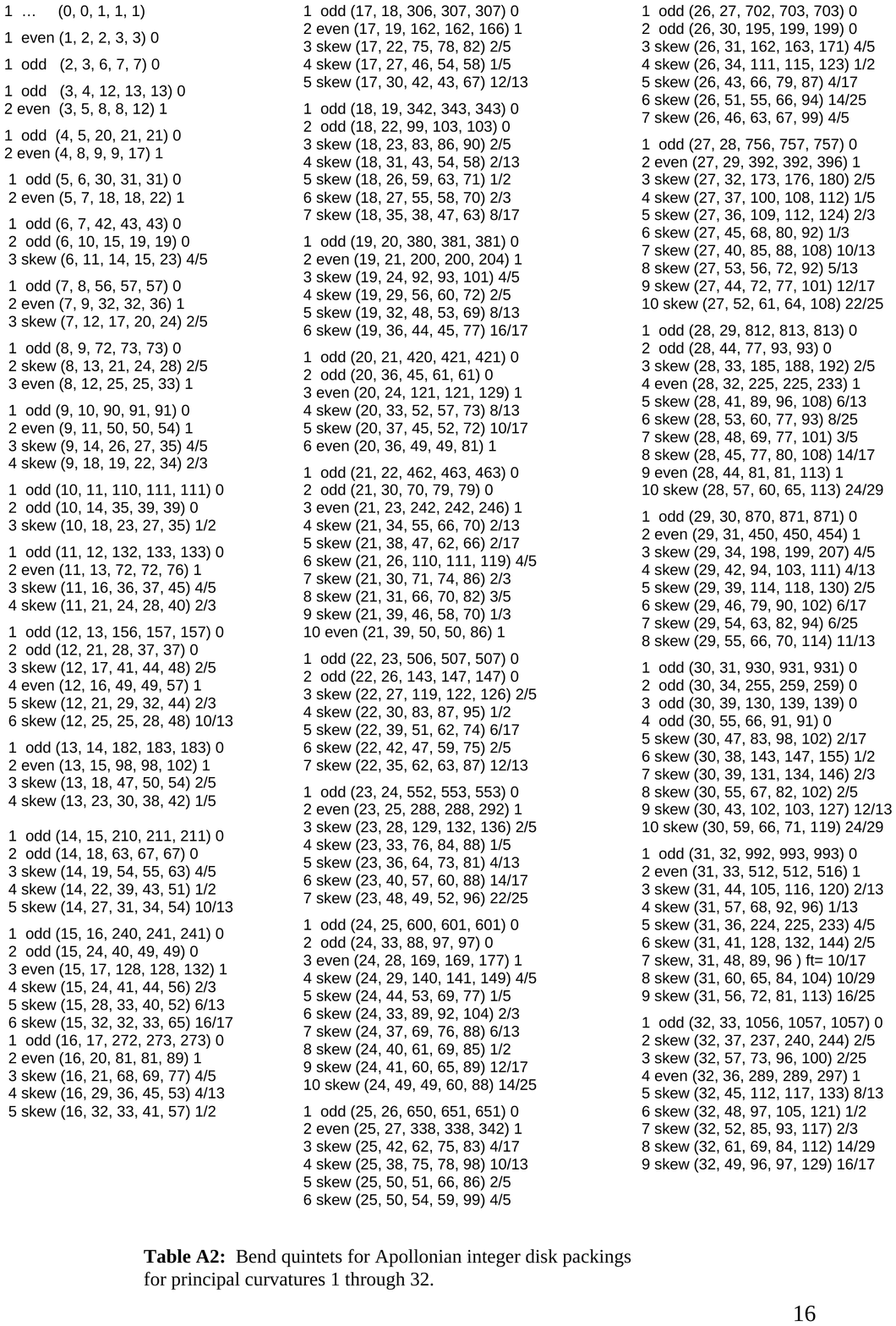}
  \caption{Bend quintets for Apollonian integer disk packings for principal curvatures 1 through 32.}
  \label{fig:tbl-A2}
\end{table}

\end{document}